\newcommand{\tn}{\textnormal} \newcommand{\lra}{\longrightarrow}
 \newtheorem{de}{Definition}[section]
\newtheorem{thm}[de]{Theorem} \newtheorem{lem}[de]{Lemma}
 \newtheorem{nt}[de]{Notation}
 \newtheorem{rmk}[de]{Remark}
\def\E{{\rm E}}
\def\ESp{{\rm ESp}}
\def\Sp{{\rm Sp}}
\def\GL{{\rm GL}}
\def\SL{{\rm SL}}
\def\Um{{\rm Um}}
\def\End{{\rm End}}
\def\Aut{{\rm Aut}}
\def\Trans{{\rm Trans}}
\def\ETrans{{\rm ETrans}}
\def\lra{\longrightarrow}
\newcommand{\gm}{\mathfrak{m}}
\begin{document}

\title{An analogue of a result of Tits for linear and symplectic transvection groups}

\author{
Pratyusha Chattopadhyay\\
{\small Statistics and Mathematics Unit, Indian Statistical Institute, Bangalore 560 059, India}
}

\date{}

\maketitle


\begin{center}
{\it Key words: Tits's result, elementary groups, linear transvection group, symplectic transvection group}
\end{center}

{\small ~~~~Abstract: In \cite{N} Bogdan Nica presented an elementary proof of a result which says that the 
relative elementary linear group with respect to a square of an ideal of the ring is a subset of the true relative elementary linear group. The original result was proved by J. Tits in \cite{T} in the much general context of Chevalley groups. In this paper we prove analogues of the result of Tits for linear transvection 
group and symplectic transvection group. We also obtain an elementary proof of a special case of Tits's result, 
namely the case of elementary symplectic group, using commutator identities for generators of this group.
}

\section{\large Introduction}

Let $R$ be a commutative ring with $1$. Let E$_{ij}(\lambda) : = I_{n} + e_{ij}(\lambda)$, $1 \leq i \neq j \leq n$, 
$\lambda \in R$, where $e_{ij}(\lambda) \in$ M$_{n}(R)$ has an entry $\lambda$ in its $(i, j)$-th position and 
zeros elsewhere. $\E_n(R)$ is the subgroup of $\SL_n(R)$ generated as a group by the elements {\rm E}$_{ij}(a)$, $a \in R$, $1 \leq i \neq j \leq n$. For an ideal $I$ of $R$ the relative elementary linear group {\rm E}$_n(I)$ is the subgroup of {\rm E}$_n(R)$ generated as a group by the elements {\rm E}$_{ij}(x)$, $x \in I$, $1 \leq i \neq j \leq n$. The relative elementary linear group ${\rm E}_n(R, I)$ is defined as the normal closure of {\rm E}$_n(I)$ in {\rm E}$_n(R)$.  

A well known normality theorem of  A.A. Suslin says that the elementary linear group $\E_n(R)$ is normal in the 
general linear group $\GL_n(R)$, for $n \ge 3$. Suslin also proved a stronger relative version of this theorem 
which says that the relative elementary linear group $\E_n(R, I)$, where $I$ is an ideal of a commutative ring
$R$ with 1, is normal in the general linear group $\GL_n(R)$, for $n \ge 3$ (see \cite{Su1}, Corollary 1.4). 
We note that G. Taddei extended Suslin's normality theorem to the set up of Chevalley groups over rings in \cite{Ta}.

In \cite{N} B. Nica while proving  the so called "true relative of Suslin's normality theorem" also gave an elementary proof of the fact that  the group ${\rm E}_n(R, I^2)$ is a subgroup of ${\rm E}_n(I)$ (see \cite{N}, Theorem 3). This result is a special case of a result of J. Tits (see \cite{T}, Proposition 3) which was originally proved in the much general context of Chevalley groups. It should be mentioned that A. Stepanov reproved Tits's result for elementary groups in \S 3 of \cite{St}.

H. Bass introduced a very important class of groups, called
Transvection groups, in the study of projective modules and  their
K-theory; defnitions of these groups are recalled in \S 4 and \S 5.
We also refer to  \cite{ho} for these basic definitions. Our main goal in this paper is to prove analogues of the above mentioned result of J. Tits for linear transvection groups (Theorem \ref{Tits-linTrans}) and symplectic transvection groups (Theorem \ref{Tits-sympTrans}). As in Theorem 2, \cite{N}  we also obtain an elementary proof, different from that in \cite{St}, of a special case 
of Tits's result, namely the case of elementary symplectic group $\ESp_{2n}(R)$, using commutator identities for generators of $\ESp_{2n}(R)$ (Theorem \ref{tits-symp}).

\section{\large Preliminaries}

Here we recall some basic notions which will be used in the subsequent sections.

\medskip
In this article we will always assume that $R$ is a commutative ring with $1$. 
A column ${ v} = (v_{1}, \ldots, v_{n})^t \in R^{n}$ is said to be {\it
unimodular} if there are elements $w_{1}, \ldots, w_{n}$ in $R$ such
that $v_{1}w_{1} + \cdots + v_{n}w_{n} = 1$. Um$_{n}(R)$ will denote
the set of all unimodular columns ${ v} \in R^{n}$. Let $I$ be an ideal
in $R$. We denote by ${\rm Um}_n(R, I)$ the set of all unimodular columns
of length $n$ which are congruent to $e_1 = (1, 0, \ldots, 0)$ modulo
$I$. (If $I = R$, then ${\rm Um}_n(R, I)$ is ${\rm Um}_n(R)$).

\begin{de} {\rm Let $P$ be a finitely generated projective $R$-module. 
An element $p \in P$ is said to be {\it unimodular} if there exists a 
$R$-linear map $\phi: P \to R$ such that $\phi(p)=1$. The collection 
of all unimodular elements of $P$ is denoted by $\Um(P)$. 

Let $P$ be of the form $R \oplus Q$ and have an element of the form
$(1,0)$ which correspond to the unimodular element. An element $(a,q)
\in P$ is said to be {\it relative unimodular} with respect to an ideal $I$ of 
$R$ if $(a,q)$ is unimodular and $(a,q)$ is congruent to $(1,0)$
modulo $IP$.  The collection of all relative unimodular elements
with respect to an ideal $I$ is denoted by ${\rm Um}(P,IP)$.  }
\end{de}

Let us recall that if $M$ is a finitely presented $R$-module and $S$
is a multiplicative set of $R$, then $S^{-1} {\rm Hom}_R(M,R) \cong
{\rm Hom}_{R_S}(M_S, R_S)$ (Theorem 2.13", Chapter I, \cite{L}). Also 
recall that if $f=(f_1, \ldots, f_n)\in R^n := M$, then $\Theta_M(f)=\{ \phi(f): 
\phi \in {\rm  Hom}(M,R) \}= \sum_{i=1}^n Rf_i$. Therefore, if $P$ is a finitely
generated projective $R$-module of rank $n$, $\gm$ is a maximal ideal
of $R$ and $v\in \Um(P)$, then $v_\gm \in \Um_n(R_\gm)$. Similarly if
$v \in \Um(P,IP)$ then $v_\gm \in \Um_n(R_\gm, I_\gm)$.

\begin{de} {\bf Elementary Linear Group:} 
{\rm The elementary linear group E$_{n}(R)$ denote the subgroup of SL$_{n}(R)$ 
consisting of all {\it elementary} matrices, i.e. those matrices which are a finite
product of the {\it elementary generators} E$_{ij}(\lambda) = I_{n} +
e_{ij}(\lambda)$, $1 \leq i \neq j \leq n$, $\lambda \in R$, where
$e_{ij}(\lambda) \in$ M$_{n}(R)$ has an entry $\lambda$ in its $(i,
j)$-th position and zeros elsewhere.}
\end{de}

In the sequel, if $\alpha$ denotes an $m \times n$ matrix, then we let
$\alpha^t$ denote its {\it transpose} matrix. This is of course an
$n\times m$ matrix. However, we will mostly be working with square
matrices, or rows and columns.

\begin{de} {\bf The Relative Groups E$_n(I)$, E$_n(R,I)$:} {\rm Let $I$ be
  an ideal of $R$. The relative elementary linear group {\rm E}$_n(I)$ is the
  subgroup of {\rm E}$_n(R)$ generated as a group by the elements {\rm
    E}$_{ij}(x)$, $x \in I$, $1 \leq i \neq j \leq n$. This group is also known as
   {\it  true relative elementary linear group}.

  The relative elementary linear group ${\rm E}_n(R, I)$ is the normal
  closure of {\rm E}$_n(I)$ in {\rm E}$_n(R)$. (Equivalently, ${\rm E}_n(R, I)$ 
  is generated as a group by ${\rm E}_{ij}(a) {\rm E}_{ji}(x)$E$_{ij}(-a)$,\tn{ with} 
  $a \in R$, $x \in I$, $i \neq j$, provided $n \geq 3$ (see \cite{V3}, Lemma 8)).}
\end{de}

\begin{de}
{\rm $\E_n^1(I)$ is the subgroup of $\E_n(R)$ generated by the elements 
$E_{1i}(x)$ and $E_{j1}(y)$, where $x, y \in I$, and $2 \le i, j \le n$.}
\end{de}

\begin{rmk} \label{inclusion-linear}
{\rm Note that the generators for the elementary linear group $\E_n(R)$ 
satisfies the commutator relation $[ e_{ij}(a), e_{jk}(b)] = e_{ik}(ab)$.
Using this relation we can show that $\E_n(I^2) \subseteq \E_n^1(I)$.}
\end{rmk}

\begin{de} {\bf Symplectic Group Sp$_{2n}(R)$:} {\rm The symplectic group ${\rm
  Sp}_{2n}(R)=\{\alpha \in {\rm GL}_{2n}(R)\,\,|\,\, \alpha^t \psi_n
  \alpha = \psi_n \}$, where $\psi_n = \underset{i=1}{\overset{n}\sum}
  e_{2i-1,2i}- \underset{i=1}{\overset{n}\sum} e_{2i,2i-1}$, the
  standard symplectic form.}
\end{de}

\tn{Let $\sigma$ denote the permutation of the natural numbers $\{ 1, 2, \ldots, 2n \}$ 
given by $\sigma(2i)=2i-1$ and $\sigma(2i-1)=2i$}.

\begin{de} \label{2.4} {\bf Elementary Symplectic Group
    ESp$_{2n}(R)$:} {\rm We define for $z \in R$, $1\le i\ne j\le 2n$,

\[ se_{ij}(z) =
\begin{cases}
  1_{2n}+ze_{ij}  & \mbox{{\rm if}  $i=\sigma(j)$,} \\
  1_{2n}+ze_{ij}-(-1)^{i+j}z e_{\sigma (j) \sigma (i)} & \mbox{{\rm
      if} $i\ne \sigma(j)$.}
\end{cases}
\]

It is easy to check that all these elements belong to {\rm Sp}$_{2n}(R)$. 
We call them {\it elementary symplectic matrices} over $R$ and the
subgroup of {\rm Sp}$_{2n}(R)$ generated by them is called the
{\it elementary symplectic group} {\rm ESp}$_{2n}(R)$.}
\end{de}

\begin{de} {\bf The Relative Group $\ESp_{2n}(I), \ESp_{2n}(R,I)$:}
{\rm Let $I$ be an ideal of $R$. The relative elementary symplectic 
group $\ESp_{2n}(I)$ is the subgroup $\ESp_{2n}(R)$ generated by 
the elements $se_{i j}(x), x \in I$ and $1 \le i \ne j \le 2n$. This group is also known as
    {\it true relative elementary symplectic group}.

The relative elementary symplectic group $\ESp_{2n}(R, I)$ is the normal closure 
of $\ESp_{2n}(I)$ in $\ESp_{2n}(R)$.}
\end{de}

\begin{de}
  {\rm The group $\ESp_{2n}^1(I)$ is the subgroup of $\ESp_{2n}(R)$
  generated by the elements of the form $se_{1 i}(x)$ and $se_{j
    1}(y)$, where $x, y \in I$ and $2 \le i, j \le 2n$.}
\end{de}

\begin{rmk}  \label{inclusion-symplectic}
{\rm Note that the generators of elementary symplectic group $\ESp_{2n}(R)$ 
satisfies commutator relations $[se_{ik}(a), se_{kj}(b)] ~=~
se_{ij}(ab)$, where $i \ne \sigma(j), k \ne \sigma(i), \sigma(j)$ and $[se_{ik}(a),
  se_{k \sigma(i)}(b)] ~=~ se_{i \sigma(i)}(2ab)$, where $k \ne i, \sigma(i)$. Using these
  relations we can show that when $R=2R$, we have
$\ESp_{2n}(I^2) \subseteq \ESp_{2n}^1(I)$.}
\end{rmk}

\begin{nt}
{\rm Let $M$ be a finitely presented $R$-module and $a$ be a
  non-nilpotent element of $R$. Let $R_a$ denote the ring $R$
  localised at the multiplicative set $\{a^i : i \ge 0 \}$ and $M_a$
  denote the $R_a$-module $M$ localised at $\{a^i : i \ge 0 \}$. Let
  $\alpha(X)$ be an element of $\End(M[X])$. The localization map $i:
  M \to M_a$ induces a map $i^*: \End(M[X]) \to \End(M[X]_a)
  = \End(M_a[X])$. We shall denote $i^*(\alpha(X))$ by $\alpha(X)_a$
  in the sequel.}
\end{nt}

\section{\large The free case}

In this section we obtain an elementary proof of Tits's result for the elementary symplectic groups. 
We begin by recalling a result of B. Nica.

\begin{thm} \label{tits-lin}
Let $R$ be a ring and $I$ be an ideal of $R$. Then $\E_n(R, I^2) \subseteq \E_n(I)$, for
$n \ge 3$.
\end{thm}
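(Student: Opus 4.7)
The plan is to exploit the explicit description of the normal closure in the definition: for $n\ge 3$, the group $\E_n(R,I^2)$ is generated by conjugates $E_{ij}(a)E_{ji}(x)E_{ij}(-a)$ with $a\in R$ and $x\in I^2$ (this is the Vaserstein-type reduction cited from \cite{V3}, Lemma 8). So it suffices to show that every such generator already lies in $\E_n(I)$.

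First I would reduce to the case $x=bc$ with $b,c\in I$. Writing a general $x\in I^2$ as a finite sum $x=\sum_k b_kc_k$, the identity $E_{ji}(y+z)=E_{ji}(y)E_{ji}(z)$ gives
\[
E_{ij}(a)E_{ji}(x)E_{ij}(-a)\;=\;\prod_k E_{ij}(a)E_{ji}(b_kc_k)E_{ij}(-a),
\]
so a termwise argument is enough. For $x=bc$ I would pick a third index $k\notin\{i,j\}$, which is available precisely because $n\ge 3$, and use the Steinberg relation
\[
E_{ji}(bc)\;=\;[\,E_{jk}(b),\,E_{ki}(c)\,].
\]
Conjugation by $E_{ij}(a)$ distributes over the commutator, yielding
\[
E_{ij}(a)E_{ji}(bc)E_{ij}(-a)\;=\;\bigl[\,E_{ij}(a)E_{jk}(b)E_{ij}(-a),\; E_{ij}(a)E_{ki}(c)E_{ij}(-a)\,\bigr].
\]

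Now I would compute each of the two conjugates by hand using the basic commutator identities: $E_{ij}(a)E_{jk}(b)E_{ij}(-a)=E_{ik}(ab)E_{jk}(b)$ and $E_{ij}(a)E_{ki}(c)E_{ij}(-a)=E_{kj}(-ca)E_{ki}(c)$. The key point is that the entries $ab,\,b,\,ca,\,c$ all lie in $I$ (since $b,c\in I$ and $I$ is an ideal), so both conjugates are products of elements of $\E_n(I)$, and therefore so is their commutator. This puts $E_{ij}(a)E_{ji}(bc)E_{ij}(-a)$ inside $\E_n(I)$, and the proof is complete.

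The only ``work'' is bookkeeping the two small conjugation computations and making sure the auxiliary index $k$ can always be chosen, which is exactly the role of the hypothesis $n\ge 3$; I do not anticipate any real obstacle. Note that this argument is essentially the linear analogue of Remark \ref{inclusion-linear}, upgraded from $\E_n(I^2)$ to the normal closure $\E_n(R,I^2)$ by absorbing the outer conjugation $E_{ij}(a)\cdot(-)\cdot E_{ij}(-a)$ into the two factors of the commutator before invoking the ideal property.
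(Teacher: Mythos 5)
Your proof is correct, and it is essentially the same argument as the one the paper relies on: the paper itself gives no proof beyond citing Theorem~3 of \cite{N}, and Nica's proof is exactly this combination of Vaserstein's generation of $\E_n(R,I^2)$ by the conjugates $E_{ij}(a)E_{ji}(x)E_{ij}(-a)$, the decomposition $E_{ji}(bc)=[E_{jk}(b),E_{ki}(c)]$ via a third index (whence $n\ge 3$), and the two conjugation identities $E_{ij}(a)E_{jk}(b)E_{ij}(-a)=E_{ik}(ab)E_{jk}(b)$ and $E_{ij}(a)E_{ki}(c)E_{ij}(-a)=E_{kj}(-ca)E_{ki}(c)$, all of whose factors visibly lie in $\E_n(I)$. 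Your computations check out, so your writeup is simply a self-contained version of the cited proof.
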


\begin{proof} 
See proof of Theorem 3 in \cite{N}.
\end{proof}

Following ideas from \cite{K} we will establish a symplectic analogue of the above result. We 
first state and prove a few lemmas. We use the notation $\widetilde{v} = v^t \psi_n$, for a row 
vector $v^t$ of length $2n$ and the standard symplectic form $\psi_n$.

\begin{lem} \label{short-rt}
Let $R$ be a ring with $R=2R$ and $I$ be an ideal of $R$. For $n \ge 2$, $v \in R^{2n}$, and $a,b \in I$ we have 
$I_{2n+2} + ab v \widetilde{v} \in [\ESp_{2n+2}(I), \ESp_{2n+2}(I)]$.
\end{lem}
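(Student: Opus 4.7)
The plan is to realise $T_v(ab) := I_{2n+2} + abv\widetilde{v}$ as a product of commutators in $\ESp_{2n+2}(I)$ by decomposing $v$ coordinate-wise and applying the two commutator identities of Remark \ref{inclusion-symplectic}. Two hypotheses are used throughout: $R = 2R$ forces $2$ to be a unit in $R$ (so $a/2 \in I$ whenever $a \in I$); and because $v \in R^{2n}$ is embedded in the first $2n$ coordinates while $n \ge 2$, the symplectic pair $\{2n+1,2n+2\}$ and at least one additional free index are always available when choosing an auxiliary index in a commutator identity.

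First I dispose of the base case $v = v_iE_i$. A direct computation gives $T_v(ab) = se_{i,\sigma(i)}(\delta_i abv_i^2)$ for a sign $\delta_i = \pm 1$. Writing $\delta_i ab v_i^2 = 2\cdot (a/2)\cdot(\delta_i b v_i^2)$ with both factors in $I$ and applying the long-root identity $[se_{ik}(x), se_{k,\sigma(i)}(y)] = se_{i,\sigma(i)}(2xy)$ for any $k \notin\{i,\sigma(i)\}$ realises $T_v(ab)$ as a single commutator of elements of $\ESp_{2n+2}(I)$. For a pair $v = v_iE_i + v_jE_j$ with $i,j\le 2n$ and $j\ne\sigma(i)$, the orthogonality $\widetilde{E_i}E_j = 0$ yields
$$
T_v(ab) \;=\; T_{v_iE_i}(ab)\,T_{v_jE_j}(ab)\,\bigl(I + abv_iv_j(E_i\widetilde{E_j}+E_j\widetilde{E_i})\bigr),
$$
and one checks that the third factor equals the single short-root generator $se_{i,\sigma(j)}(\delta_j abv_iv_j)$. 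Factoring $\delta_j abv_iv_j = (\delta_j av_i)(bv_j)$ with both factors in $I$ and applying the short-root identity $[se_{ik}(x), se_{k,\sigma(j)}(y)] = se_{i,\sigma(j)}(xy)$ for some $k \notin\{i,\sigma(i),j,\sigma(j)\}$ (available since $2n+2 \ge 6$) places this cross factor in $[\ESp_{2n+2}(I), \ESp_{2n+2}(I)]$; combined with the base-case treatment of the two endpoint transvections, this handles the non-$\sigma$-paired two-coordinate subcase.

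The $\sigma$-paired subcase $v = v_iE_i + v_{\sigma(i)}E_{\sigma(i)}$ is the main obstacle, since $\widetilde{E_i}E_{\sigma(i)} \ne 0$ and the simple splitting fails (the putative cross term now contributes to the diagonal of $T_v(ab)$, not to a single short-root generator). Here the free pair $\{2n+1,2n+2\}$ becomes essential: decompose $v = w + w'$ with $w = v_iE_i + v_{\sigma(i)}E_{2n+1}$ and $w' = v_{\sigma(i)}(E_{\sigma(i)} - E_{2n+1})$, both of whose supports avoid being $\sigma$-paired and hence fall under the previous step. Unwinding the multiplicative relation between $T_{w+w'}(ab)$ and $T_w(ab)T_{w'}(ab)$ (which now carries a non-trivial correction because $\widetilde{w}w' = v_iv_{\sigma(i)}\delta_i \ne 0$) produces correction factors of the short-root form that are again commutators by the identity used above. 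For an arbitrary $v \in R^{2n}$ one then inducts on the number of non-zero coordinates, adding one coordinate at a time and treating each resulting cross term by whichever of the previous cases applies. The bulk of the technical work, and the obstacle I expect, lies in the paired case: one must carefully track the $\delta$-signs coming from $\widetilde{E_i}E_{\sigma(i)} = \pm 1$ and verify that every correction introduced by the auxiliary-pair detour stays inside $[\ESp_{2n+2}(I), \ESp_{2n+2}(I)]$, which is precisely where both hypotheses $R = 2R$ and $n \ge 2$ become indispensable.
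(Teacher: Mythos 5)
Your strategy---decomposing $v$ coordinatewise and assembling $T_v(ab)=I_{2n+2}+abv\widetilde{v}$ from the Steinberg-type relations among the $se_{ij}$---is genuinely different from the paper's, and it breaks down exactly where you predicted. The paper's proof is a single block-matrix identity: with $v$ placed in the first $2n$ coordinates,
\begin{equation*}
I_{2n+2}+abv\widetilde{v}\;=\;\left[\begin{pmatrix}I_{2n}&0&(a/2)v\\(a/2)\widetilde{v}&1&0\\0&0&1\end{pmatrix},\ \begin{pmatrix}I_{2n}&-bv&0\\0&1&0\\b\widetilde{v}&0&1\end{pmatrix}\right],
\end{equation*}
verified directly using $\widetilde{v}\,v=0$; both factors are elementary symplectic transvections in the directions of the two extra basis vectors with parameters $(a/2)v$ and $bv$ lying in $I\cdot R^{2n}$ (here $R=2R$ makes $2$ a unit, so $a/2\in I$), hence both lie in $\ESp_{2n+2}(I)$. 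This treats every $v$ uniformly, with no case analysis, no induction on the support, and no use of the Steinberg relations. Your single-coordinate and non-$\sigma$-paired two-coordinate cases are correct as written, but they are doing by hand what the identity above does in one stroke.

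The genuine gap is the $\sigma$-paired case. Writing $v=w+w'$ with $w=v_iE_i+v_{\sigma(i)}E_{2n+1}$ and $w'=v_{\sigma(i)}(E_{\sigma(i)}-E_{2n+1})$ does move each summand into the unpaired case, but the assertion that $T_{w+w'}(ab)$ differs from $T_w(ab)T_{w'}(ab)$ by ``correction factors of the short-root form'' is unsubstantiated and does not hold in any visible way. Setting $\lambda=ab$ and $c=\widetilde{w}w'=\pm v_iv_{\sigma(i)}\neq 0$, a direct expansion (using $\widetilde{w}w=\widetilde{w'}w'=0$ and $\widetilde{w'}w=-c$) gives
\begin{equation*}
T_{w+w'}(\lambda)\bigl(T_w(\lambda)T_{w'}(\lambda)\bigr)^{-1}
= I+\lambda(1-\lambda c)\,w\widetilde{w'}+\lambda(1-\lambda^2c^2)\,w'\widetilde{w}+\lambda^2c(1-\lambda c)\,w\widetilde{w}-\lambda^2c\,w'\widetilde{w'}.
\end{equation*}
The two rank-one pieces $w\widetilde{w'}$ and $w'\widetilde{w}$ occur with \emph{different} coefficients, and an individual factor $I+\mu\,w\widetilde{w'}$ is not even symplectic when $c\neq0$, so this cannot be peeled apart into $se$-generators the way your orthogonal cross terms were; decomposing this $4\times4$-supported symplectic block into commutators of $\ESp_{2n+2}(I)$-elements is a problem of the same nature and difficulty as the lemma itself, so the reduction is not a reduction. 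Nor can the paired case be avoided: restricted to the hyperbolic plane on $\{i,\sigma(i)\}$, the element $I+\lambda v\widetilde{v}$ is a general determinant-one unipotent $2\times2$ block with entries congruent to $1$ modulo $I^2$, and the analogous inclusion into $\E_2(I)$ is false, so the two extra coordinates must enter in an essential way---which is precisely what the displayed commutator identity accomplishes. (A smaller issue: your closing induction ``one coordinate at a time'' re-encounters the paired situation whenever $\sigma(j)$ already lies in the support, so even granting the paired case it would have to proceed by whole hyperbolic pairs, as the paper does in Lemma \ref{short-rt-sp}.)
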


\begin{proof}
The result follows from the identity below.
\begin{align*}
\begin{pmatrix}I_{2n} + ab v \widetilde{v} & 0 & 0 \\ 0 & 1 & 0 \\ 0 & 0 & 1 \end{pmatrix} =
\begin{bmatrix}{\begin{pmatrix}I_{2n} & 0 & (a/2) v \\ (a/2) \widetilde{v} & 1 & 0 \\ 0 & 0 & 1\end{pmatrix}},
{\begin{pmatrix}I_{2n} & -bv & 0 \\ 0 & 1 & 0 \\ b \widetilde{v} & 0 & 1\end{pmatrix}}\end{bmatrix} \\[-\normalbaselineskip]\tag*{\qedhere}
\end{align*}
\end{proof}

\begin{lem} \label{long-rt}
Let $R$ be a ring and $I$ be an ideal of $R$. For $n \ge 2$, $v, w \in R^{2n}$ with $\widetilde{w} v = 0$, and 
$a, b \in I$ we have $I_{2n+2} + ab v \widetilde{w} + ab w \widetilde{v} \in [\ESp_{2n+2}(I), \ESp_{2n+2}(I)]$.
\end{lem}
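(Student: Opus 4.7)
The plan is to exhibit the desired matrix as a single commutator of two elementary-symplectic-type matrices, in direct analogy with the proof of Lemma \ref{short-rt}. Since we now want the cross expression $v\widetilde{w}+w\widetilde{v}$ rather than $v\widetilde{v}$, the natural guess is to place $v$ in one factor and $w$ in the other:
\begin{equation*}
\begin{pmatrix}I_{2n}+ab(v\widetilde{w}+w\widetilde{v}) & 0 & 0 \\ 0 & 1 & 0 \\ 0 & 0 & 1\end{pmatrix}
=\left[\begin{pmatrix}I_{2n} & 0 & av \\ a\widetilde{v} & 1 & 0 \\ 0 & 0 & 1\end{pmatrix},\;\begin{pmatrix}I_{2n} & -bw & 0 \\ 0 & 1 & 0 \\ b\widetilde{w} & 0 & 1\end{pmatrix}\right].
\end{equation*}
Notice that the coefficients are $a$ and $b$ rather than $a/2$ and $b$, so the hypothesis $R=2R$ used in Lemma \ref{short-rt} is not needed here: the orthogonality $\widetilde{w}v=0$ is precisely what prevents the ``short root'' contributions $v\widetilde{v}$ and $w\widetilde{w}$ from ever appearing.

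The verification splits into two parts. First, I would check that each of the two factors lies in $\ESp_{2n+2}(I)$ by writing it explicitly as a product of the generators $se_{i,2n+2}(\cdot)$ (respectively $se_{i,2n+1}(\cdot)$). Grouping the product by hyperbolic pairs $\{2k-1,2k\}$, the left factor equals
\begin{equation*}
\left(\prod_{k=1}^{n}se_{2k-1,2n+2}(av_{2k-1})\,se_{2k,2n+2}(av_{2k})\right)\cdot se_{2n+1,2n+2}(y)
\end{equation*}
for an explicit $y\in I^{2}\subseteq I$ that absorbs the extra $(2n+1,2n+2)$-entry produced by the non-commutativity of generators whose second indices are $\sigma$-paired. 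The right factor is handled symmetrically, producing an analogous correction in $se_{2n+2,2n+1}(\cdot)$.

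For the commutator itself, I would write the two factors as $A=I_{2n+2}+aE_{1}$ and $B=I_{2n+2}+bE_{2}$, where $E_{1},E_{2}$ are the obvious nilpotent matrices. A direct computation yields $E_{1}^{2}=E_{2}^{2}=0$, and the orthogonality $\widetilde{w}v=0$ (which also gives $\widetilde{v}w=0$ by antisymmetry of $\psi_{n}$) forces $E_{1}E_{2}E_{1}=0$ and $E_{2}E_{1}E_{2}=0$. Expanding $ABA^{-1}B^{-1}$ then telescopes to $I_{2n+2}+ab(E_{1}E_{2}-E_{2}E_{1})$, and a block computation identifies $E_{1}E_{2}-E_{2}E_{1}$ with the $(2n+2)\times(2n+2)$ matrix whose top-left $2n\times 2n$ block is $v\widetilde{w}+w\widetilde{v}$ and whose remaining entries are zero.

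The main obstacle is the bookkeeping in the first step: decomposing the two factors into $se_{ij}$-generators while keeping all entries in $I$ requires tracking the extra contributions from hyperbolic pairs and the corresponding corrections at the ``long root'' positions $(2n+1,2n+2)$ and $(2n+2,2n+1)$. Once this is done, the commutator computation is essentially forced by the hypothesis $\widetilde{w}v=0$.
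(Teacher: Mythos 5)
Your proof is correct and follows essentially the same route as the paper: the paper also exhibits $I_{2n+2}+ab\,v\widetilde{w}+ab\,w\widetilde{v}$ (embedded in the top-left block) as a single commutator of the same two types of block matrices (one with $v$ in the last column and $\widetilde{v}$ in row $2n+1$, one with $w$ in column $2n+1$ and $\widetilde{w}$ in the last row), the only differences being which factor carries $v$ versus $w$ and the placement of signs. Your additional observations — that $R=2R$ is not needed here, and that each factor decomposes into $se_{i,2n+1}$/$se_{i,2n+2}$-type generators with an $I^2$-valued correction at the $(2n+1,2n+2)$ position (exactly the relations recorded in Remark \ref{free}) — are accurate and merely make explicit what the paper leaves implicit.
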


\begin{proof}
The result follows from the identity below.
\begin{align*}
\begin{pmatrix} I_{2n} + ab v \widetilde{w} + ab w \widetilde{v} & 0 & 0 \\ 0 & 1 & 0 \\ 0 & 0 & 1 \end{pmatrix} 
=
\begin{bmatrix} \begin{pmatrix} I_{2n} & av & 0 \\ 0 & 1 & 0 \\ -a \widetilde{v} & 0 & 1 \end{pmatrix} , \begin{pmatrix} I_{2n} & 0 & bw \\ b \widetilde{w} & 1 & 0 \\ 0 & 0 & 1 \end{pmatrix} \end{bmatrix} \\[-\normalbaselineskip]\tag*{\qedhere}
\end{align*}
\end{proof}

\begin{lem} \label{long-rt-sp}
Let $R$ be a ring with $R=2R$ and $I$ be an ideal of $R$. Also,
assume $n \ge 2$, $v,w \in R^{2n}$ with $\widetilde{w} v = 0$, and $a, b \in I$. If $v_i = v_{\sigma(i)} = 0$, 
then $I_{2n} + ab v \widetilde{w} + ab w \widetilde{v} \in \ESp_{2n}(I)$.
\end{lem}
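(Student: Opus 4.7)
The plan is to mimic the commutator identity of Lemma \ref{long-rt} inside $\ESp_{2n}(I)$ itself, with the symplectic pair $\{i,\sigma(i)\}$ playing the role of the extra pair $\{2n+1,2n+2\}$. The main obstruction is that $w$ is not assumed to vanish on $\{i,\sigma(i)\}$, so the $Y$-matrix from the proof of Lemma \ref{long-rt} does not translate directly; to circumvent this, I would first separate off the part of $w$ supported on $\{i,\sigma(i)\}$.

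Write $w=w_0+w_1$, where $w_0=w_ie_i+w_{\sigma(i)}e_{\sigma(i)}$ and $w_1=w-w_0$, so that $w_1$ vanishes at $i$ and $\sigma(i)$. Since $v$ vanishes there, so does $\widetilde{v}$ (from the explicit form of $\psi_n$); combined with antisymmetry of $\psi_n$ and the hypothesis $\widetilde{w}v=0$, one checks that $\widetilde{v}v=0$, $\widetilde{w_0}v=\widetilde{v}w_0=0$, $\widetilde{w_1}v=\widetilde{v}w_1=0$, and $\widetilde{w_0}w_1=\widetilde{w_1}w_0=0$. All cross products between $v\widetilde{w_1}+w_1\widetilde{v}$ and $v\widetilde{w_0}+w_0\widetilde{v}$ therefore vanish, yielding the factorisation
\[
I_{2n}+ab(v\widetilde{w}+w\widetilde{v})=\bigl(I_{2n}+ab(v\widetilde{w_1}+w_1\widetilde{v})\bigr)\bigl(I_{2n}+ab(v\widetilde{w_0}+w_0\widetilde{v})\bigr).
\]

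For the first factor, $v$ and $w_1$ both live in the $(2n-2)$-dimensional symplectic subspace spanned by $\{e_k:k\neq i,\sigma(i)\}$ and still satisfy $\widetilde{w_1}v=0$. Lemma \ref{long-rt} applied on this subspace, with $\{i,\sigma(i)\}$ identified with the extra pair in the commutator, places this factor in $[\ESp_{2n}(I),\ESp_{2n}(I)]\subseteq\ESp_{2n}(I)$.

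For the second factor, setting $\alpha=w_i$, $\beta=w_{\sigma(i)}$ and unpacking $\widetilde{w_0}$ gives $v\widetilde{w_0}+w_0\widetilde{v}=\alpha P_++\beta P_-$ for specific rank-two matrices $P_\pm$ concentrated on the $i$- and $\sigma(i)$-lines (both nilpotent of order two thanks to the vanishings above). A direct calculation using $v_i=v_{\sigma(i)}=0$ and $\widetilde{v}v=0$ yields $P_+P_-=-P_-P_+=\eta_iv\widetilde{v}$ for a sign $\eta_i\in\{\pm 1\}$ depending on the parity of $i$, and hence the product identity
\[
I+ab(\alpha P_++\beta P_-)=(I+ab\alpha P_+)(I+ab\beta P_-)(I-(ab)^2\alpha\beta\eta_iv\widetilde{v}).
\]
I would then write the first two factors as products of long-root generators $se_{j,\sigma(i)}(\cdot)$ (resp.\ $se_{j,i}(\cdot)$) for $j\neq i,\sigma(i)$, corrected by short-root generators $se_{i,\sigma(i)}(\cdot)$ (resp.\ $se_{\sigma(i),i}(\cdot)$) that absorb the quadratic residues coming from the non-commutation of long-root generators on opposite roots; all arguments lie in $I$ since $ab\in I^2\subseteq I$. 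The third factor lies in $[\ESp_{2n}(I),\ESp_{2n}(I)]$ by Lemma \ref{short-rt} applied to the same $(2n-2)$-dimensional subspace. The main obstacle is this last step of sign bookkeeping: one must match the parities arising from $\psi_n$ against the signs in the definition of $se_{ij}(\cdot)$ and confirm that the short-root corrections indeed land in $I$.
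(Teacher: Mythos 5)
Your proposal is correct and follows essentially the same route as the paper: after normalising $\{i,\sigma(i)\}$ to the last coordinate pair, the paper also splits off the part of $w$ supported there, writes $I_{2n}+ab\,v\widetilde{w}+ab\,w\widetilde{v}$ as the product of $I_{2n}+ab\,v\widetilde{w_1}+ab\,w_1\widetilde{v}$ (Lemma \ref{long-rt}), two elementary factors built from $w_i v$ and $w_{\sigma(i)}v$ (your $I+ab\alpha P_+$ and $I+ab\beta P_-$), and a correction $I_{2n}+(ab)^2 w_i w_{\sigma(i)}v\widetilde{v}$ handled by Lemma \ref{short-rt}. Your extra sign bookkeeping and explicit expansion into $se$-generators is just a more detailed version of the paper's observation that the two middle factors visibly lie in $\ESp_{2n}(I)$.
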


\begin{proof}
For the sake of definiteness we assume $v_{2n-1} = v_{2n} = 0$. Let us denote $v = (v', 0, 0)^t$ and $w = (w', x, y)^t$, where $x, y \in R$. Now,
\begin{eqnarray*}
&& I_{2n} + ab v \widetilde{w} + ab w \widetilde{v} \\
&=& \begin{pmatrix} I_{2n-2} + ab v' \widetilde{w'} + ab w' \widetilde{v'} & 0 & 0 \\ 0 & 1 & 0 \\ 0 & 0 & 1 \end{pmatrix}  \begin{pmatrix} I_{2n-2} & -abyv' & 0 \\ 0 & 1 & 0 \\ aby \widetilde{v'} & 0 & 1\end{pmatrix}  \begin{pmatrix} I_{2n-2} & 0 & abxv' \\ abx \widetilde{v'} & 1 & 0 \\ 0 & 0 & 1\end{pmatrix}  \\
 && \begin{pmatrix} I_{2n-2} + (ab)^2 xy v' \widetilde{v'} & 0 & 0 \\ 0 & 1 & 0 \\ 0 & 0 & 1 \end{pmatrix}.
\end{eqnarray*}

Clearly the second and the third factor belong to $\ESp_{2n}(I)$. The first factor belongs to $\ESp_{2n}(I)$ due to Lemma \ref{long-rt} and the fourth factor belong to $\ESp_{2n}(I)$ due to Lemma \ref{short-rt}. Hence the result follows.
\end{proof}

\begin{lem} \label{short-rt-sp}
Let $R$ be a ring with $R=2R$ and I be an ideal of $R$. For $n \ge 2$, $v \in R^{2n}$, and $a, b \in I$ we have 
$I_{2n} + ab v \widetilde{v} \in \ESp_{2n}(I)$.
\end{lem}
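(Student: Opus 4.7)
The plan is to mirror the strategy used in Lemma \ref{long-rt-sp}: decompose $v$ into pieces supported on individual symplectic pairs, observe that the resulting rank-one matrices multiply to zero in pairs, and treat the diagonal and cross factors separately---the cross factors via Lemma \ref{long-rt-sp}, and the diagonal factors via an in-ambient analogue of the commutator identity proving Lemma \ref{short-rt}.

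First, I would write $v = v^{(1)} + \cdots + v^{(n)}$, where $v^{(k)} \in R^{2n}$ has entries $v_{2k-1}, v_{2k}$ at positions $2k-1, 2k$ and zeros elsewhere. A direct check (disjoint supports when $k \ne l$, antisymmetry of $\psi_n$ when $k = l$) gives $\widetilde{v^{(l)}}\, v^{(k)} = 0$ for every pair of indices. Consequently any product of two matrices of the form $v^{(i)}\widetilde{v^{(j)}}$ contains such a vanishing factor, so the matrices $I_{2n} + ab\, v^{(k)}\widetilde{v^{(k)}}$ and $I_{2n} + ab\bigl(v^{(k)}\widetilde{v^{(l)}} + v^{(l)}\widetilde{v^{(k)}}\bigr)$, $k < l$, commute pairwise and multiply to $I_{2n} + ab\, v\widetilde{v}$. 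Each cross factor then lies in $\ESp_{2n}(I)$ directly by Lemma \ref{long-rt-sp}, since $\widetilde{v^{(l)}}\, v^{(k)} = 0$ and $v^{(k)}$ has $v^{(k)}_i = v^{(k)}_{\sigma(i)} = 0$ for every $i \notin \{2k-1, 2k\}$.

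The hard part will be to show each diagonal factor $I_{2n} + ab\, v^{(k)}\widetilde{v^{(k)}}$ lies in $\ESp_{2n}(I)$. Because $n \ge 2$, there is at least one index $m \in \{1, \dots, n\} \setminus \{k\}$, and I would recycle the symplectic pair $(2m-1, 2m)$ in place of the ``extra'' pair $(2n+1, 2n+2)$ used in Lemma \ref{short-rt}. Writing $e_i$ for the $i$-th standard basis column of $R^{2n}$, set
\[
C_k = I_{2n} + \tfrac{a}{2}\, v^{(k)} e_{2m}^{\,t} + \tfrac{a}{2}\, e_{2m-1}\widetilde{v^{(k)}}, \qquad
D_k = I_{2n} - b\, v^{(k)} e_{2m-1}^{\,t} + b\, e_{2m}\widetilde{v^{(k)}}.
\]
The vanishings $v^{(k)}_{2m-1} = v^{(k)}_{2m} = 0$ and $\widetilde{v^{(k)}}_{2m-1} = \widetilde{v^{(k)}}_{2m} = 0$ should let me factor each of $C_k, D_k$ as a product of the two short-root generators $se_{2k-1, 2m}(\cdot), se_{2k, 2m}(\cdot)$ (respectively $se_{2k-1, 2m-1}(\cdot), se_{2k, 2m-1}(\cdot)$) together with a single long-root correction $se_{2m-1, 2m}(\cdot)$ or $se_{2m, 2m-1}(\cdot)$; every argument lies in $I$, using $R=2R$ to accommodate the factor $1/2$, so $C_k, D_k \in \ESp_{2n}(I)$. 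Writing $C_k = I + x$ and $D_k = I + y$, the same vanishings together with $\widetilde{v^{(k)}}\, v^{(k)} = 0$ will force $x^2 = y^2 = xyx = yxy = xyxy = 0$ while leaving $xy - yx = ab\, v^{(k)}\widetilde{v^{(k)}}$, so that $[C_k, D_k] = I_{2n} + ab\, v^{(k)}\widetilde{v^{(k)}}$. Multiplying the diagonal and cross factors together then places $I_{2n} + ab\, v\widetilde{v}$ in $\ESp_{2n}(I)$.
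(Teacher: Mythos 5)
Your proposal is correct and follows essentially the same route as the paper: the paper also decomposes $v$ along symplectic pairs (into just two pieces, $v'$ on the first $2n-2$ coordinates and $v''$ on the last pair), handles the cross term by Lemma \ref{long-rt-sp}, and handles the diagonal terms $I_{2n}+ab\,v'\widetilde{v'}$ and $I_{2n}+ab\,v''\widetilde{v''}$ by the commutator identity of Lemma \ref{short-rt}, which is exactly the identity $[C_k,D_k]=I_{2n}+ab\,v^{(k)}\widetilde{v^{(k)}}$ you re-derive with an auxiliary hyperbolic pair chosen inside $\{1,\dots,2n\}$. Your finer $n$-fold decomposition and the explicit in-ambient verification (including the factorization of $C_k,D_k$ into elementary generators with entries in $I$, using $R=2R$) are sound and, if anything, spell out details the paper leaves implicit when it cites Lemma \ref{short-rt} for the factor supported on the last pair.
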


\begin{proof}
Let us denote $v = v' + v''$, where $v' = (v_1, v_2, \ldots, v_{2n-2}, 0, 0)$ and $v'' = (0, 0, \ldots, 0, v_{2n-1}, v_{2n})$. Note that 
\begin{eqnarray*}
I_{2n} + ab v \widetilde{v} &=& (I_{2n} + ab v' \widetilde{v'}) ~ (I_{2n} + ab v'' \widetilde{v'} + ab v' \widetilde{v''}) ~ (I_{2n} + ab v'' \widetilde{v''}).
\end{eqnarray*}

Here the first and the third factor belong to $\ESp_{2n}(I)$ due to Lemma \ref{short-rt} and the second factor belongs to $\ESp_{2n}(I)$ due to Lemma \ref{long-rt-sp}.  Hence the result follows.
\end{proof}

\begin{rmk} \label{kernel-Um}
{\rm Let $w \in I^n (\subseteq R^n)$ and $v \in \Um_n(R)$ be such that $w^t v = 0$. Then $w = \sum_{i < j} a_{ij}(v_j e_i - v_i e_j)$, for $a_{ij} \in I$. Here $e_i \in R^n$ is a column vector of length $n$ which has $1$ at the i-th place and zeros elsewhere. This is a well known result. For proof one can see Chapter 5 of \cite{M} or \cite{sv}.}
\end{rmk}

Following identity plays a crucial role in our  proof of Tits's result for elementary symplectic group.

\begin{lem} \label{imp}
Let $w \in R^n$ and $u_i \in I^n (\subseteq R^n)$ for $i = 1, 2, \ldots, k$ be such that $\widetilde{u_i} w = 0$ for all $i$. Then there is an $x \in I$ such that $I_n + \sum_{i=1}^k (u_i \widetilde{w} + w \widetilde{u_i}) = \prod_{i=1}^k (I_n + u_i \widetilde{w} + w \widetilde{u_i}) ~ (I_n + x w \widetilde{w})$.
\end{lem}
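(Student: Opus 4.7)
The plan is to compute the product $\prod_{i=1}^k A_i$ explicitly, where $A_i := I_n + u_i\widetilde{w} + w\widetilde{u_i}$, and then show that a single correction of the form $I_n + xw\widetilde{w}$ on the right accounts for the discrepancy from $I_n + \sum_i (u_i\widetilde{w}+w\widetilde{u_i})$. Three scalar identities drive the whole argument: (i) $\widetilde{w}w = w^t\psi_n w = 0$ by skew-symmetry of $\psi_n$; (ii) $\widetilde{u_i}w = 0$ by hypothesis; and (iii) $\widetilde{w}u_i = 0$, obtained by transposing the scalar $\widetilde{u_i}w$ and using $\psi_n^t = -\psi_n$.

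First I expand $A_i A_j$. Of the four cross terms in $(u_i\widetilde{w}+w\widetilde{u_i})(u_j\widetilde{w}+w\widetilde{u_j})$, three vanish immediately: $u_i\widetilde{w}\cdot u_j\widetilde{w}$ by (iii), $u_i\widetilde{w}\cdot w\widetilde{u_j}$ by (i), and $w\widetilde{u_i}\cdot w\widetilde{u_j}$ by (ii). The sole survivor is $w\widetilde{u_i}\cdot u_j\widetilde{w} = (\widetilde{u_i}u_j)\,w\widetilde{w}$, since $\widetilde{u_i}u_j$ is a scalar. A straightforward induction on $k$, in which the new factor $A_{k+1}$ contributes exactly one additional scalar $\widetilde{U_k}\,u_{k+1}$ with $U_k := \sum_{i\le k}u_i$ (all the other new cross terms vanish by the same three identities), then yields
\[
\prod_{i=1}^k A_i \;=\; I_n + \sum_{i=1}^k\bigl(u_i\widetilde{w}+w\widetilde{u_i}\bigr) + y_k\,w\widetilde{w}, \qquad y_k := \sum_{1\le i<j\le k}\widetilde{u_i}u_j.
\]
Since each $u_i \in I^n$, we have $\widetilde{u_i}u_j = u_i^t\psi_n u_j \in I^2$, so $y_k \in I^2 \subseteq I$.

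To finish, set $x := -y_k \in I$ and expand $\bigl(\prod_{i=1}^k A_i\bigr)\bigl(I_n + xw\widetilde{w}\bigr)$. Each new cross term introduced by multiplying by $xw\widetilde{w}$ contains one of the scalars $\widetilde{w}w$, $\widetilde{u_i}w$, or $\widetilde{w}u_i$ in its interior, and therefore vanishes by (i)-(iii). The product collapses to $I_n + \sum_i(u_i\widetilde{w}+w\widetilde{u_i}) + (y_k+x)w\widetilde{w} = I_n + \sum_i(u_i\widetilde{w}+w\widetilde{u_i})$, giving the required identity. The only mildly delicate step is the inductive bookkeeping that identifies $y_k$ as $\sum_{i<j}\widetilde{u_i}u_j$; once the three vanishing identities are recorded the rest is purely mechanical expansion.
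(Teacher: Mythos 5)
Your proof is correct and follows the same route the paper intends: the paper simply cites Kope\u{i}ko's Lemma 1.9 and notes it goes by induction on $k$, which is exactly the induction you carry out, with the three vanishing scalars $\widetilde{w}w=0$, $\widetilde{u_i}w=0$, $\widetilde{w}u_i=0$ doing all the work and $x=-\sum_{i<j}\widetilde{u_i}u_j\in I^2\subseteq I$. You have in effect supplied the details the paper leaves to the reference.
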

 
\begin{proof}
This can be proved using induction on $k$. For details one can see proof of Lemma 1.9 in \cite{K}.
\end{proof}

\begin{lem} \label{long-rt-sp(2)}
Let $R$ be a ring with $R=2R$ and $I$ be an ideal of $R$. Also, assume $n \ge 3$, $v \in R^{2n}$, $w \in \Um_{2n}(R)$ with $\widetilde{v} w = 0$, and $a, b \in I$. Then $\gamma = I_{2n} + ab v \widetilde{w} + ab w \widetilde{v} \in \ESp_{2n}(I)$.
\end{lem}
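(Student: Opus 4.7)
The approach is to reduce to Lemma~\ref{long-rt-sp} by decomposing $abv$ into pieces each satisfying its zero-pair hypothesis, and then reassembling via Lemma~\ref{imp}. Since $w \in \Um_{2n}(R)$, the vector $v' := \psi_n w$ is also unimodular, and the hypothesis $\widetilde{v}w = v^t\psi_n w = 0$ reads $v^t v' = 0$. Because $bv \in I^{2n}$ and $(bv)^t v' = 0$, Remark~\ref{kernel-Um} (with $bv$ in the role of the $I$-valued vector and $v'$ the unimodular one) produces $c_{ij} \in I$ such that
\[
bv \;=\; \sum_{i<j} c_{ij}\, u_{ij}, \qquad u_{ij} := v'_j e_i - v'_i e_j.
\]
Multiplying by $a$ gives $abv = \sum_{i<j}(ac_{ij})\,u_{ij}$, so every coefficient is a genuine product of two elements of $I$, which is what Lemma~\ref{long-rt-sp} needs.

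Each $u_{ij}$ is supported only at positions $i$ and $j$, so since $n \ge 3$ at least one of the $n$ symplectic pairs $\{2k-1,2k\}$ is disjoint from $\{i,j\}$; thus $u_{ij}$ has $(u_{ij})_k = (u_{ij})_{\sigma(k)} = 0$ for some $k$. Moreover $\widetilde{u_{ij}}\,w = u_{ij}^t v' = v'_j v'_i - v'_i v'_j = 0$. Lemma~\ref{long-rt-sp} therefore applies to each $u_{ij}$ with scalars $a, c_{ij} \in I$, giving
\[
A_{ij} \;:=\; I_{2n} + (ac_{ij})\, u_{ij}\widetilde{w} + (ac_{ij})\, w\,\widetilde{u_{ij}} \;\in\; \ESp_{2n}(I).
\]
Applying Lemma~\ref{imp} to the family $\{ac_{ij} u_{ij}\}_{i<j} \subset I^{2n}$ (each symplectically orthogonal to $w$) and noting that $\sum(ac_{ij})u_{ij} = abv$ yields
\[
\gamma \;=\; \Bigl(\prod_{i<j} A_{ij}\Bigr)\,\bigl(I_{2n} + x\, w\widetilde{w}\bigr)
\]
for some $x \in I$. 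The product already lies in $\ESp_{2n}(I)$.

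The main subtle point, and the one that forces the argument to go through, is showing $x \in I^2$ so that $I_{2n} + xw\widetilde{w}$ can be handled by Lemma~\ref{short-rt-sp}. A short expansion using $\widetilde{u}w = 0$ for every $u \in \{ac_{ij}u_{ij}\}$, together with $\widetilde{w}u = -\widetilde{u}w = 0$ and $\widetilde{w}w = 0$ (skew-symmetry of $\psi_n$), collapses the cross terms to
\[
(u_i\widetilde{w} + w\widetilde{u_i})(u_j\widetilde{w} + w\widetilde{u_j}) \;=\; (\widetilde{u_i} u_j)\, w\widetilde{w},
\]
while any product involving $w\widetilde{w}$ vanishes. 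Hence expanding $\prod A_{ij}$ produces exactly $I_{2n} + \sum_{i<j}(ac_{ij})(u_{ij}\widetilde{w} + w\widetilde{u_{ij}}) + \beta\, w\widetilde{w}$ with $\beta \in I^2$, so $x = -\beta \in I^2$. Writing $x = \sum_k \alpha_k \beta_k$ with $\alpha_k, \beta_k \in I$ and using $(w\widetilde{w})^2 = w(\widetilde{w}w)\widetilde{w} = 0$ to multiply additively, we get $I_{2n} + x w\widetilde{w} = \prod_k(I_{2n} + \alpha_k \beta_k w\widetilde{w})$, each factor lying in $\ESp_{2n}(I)$ by Lemma~\ref{short-rt-sp}. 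Combining everything gives $\gamma \in \ESp_{2n}(I)$, as desired.
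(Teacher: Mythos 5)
Your proof is correct and follows essentially the same route as the paper: decompose $abv$ via Remark \ref{kernel-Um} into vectors supported on two coordinates, handle each piece with Lemma \ref{long-rt-sp} (using $n \ge 3$ to find an untouched symplectic pair), reassemble with Lemma \ref{imp}, and absorb the $w\widetilde{w}$ correction with Lemma \ref{short-rt-sp}. The only differences are minor bookkeeping: you push the factor $b$ into the decomposition so the coefficients $c_{ij}$ already lie in $I$, and you make explicit a point the paper leaves implicit, namely that the scalar $x$ produced by Lemma \ref{imp} actually lies in $I^2$, which is what is needed for Lemma \ref{short-rt-sp} to apply to the final factor.
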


\begin{proof}
By Remark \ref{kernel-Um}, $\widetilde{v}$ can be written in the form $\sum_{i < j} x_{ij} (w_i e_j - w_j e_i)$, where $x_{ij} \in R$. Note that $v = \psi_n \widetilde{v}^t = \sum_{i <j} x_{ij} ((-1)^j w_i e_{\sigma(j)} - (-1)^i w_j e_{\sigma(i)})$. Then $\gamma$ can be written as $I_{2n} + \sum_{i < j} \big(  ab x_{ij} ((-1)^j w_i e_{\sigma(j)} - (-1)^i w_i e_{\sigma(i)})^t \widetilde{w} + ab x_{ij} w (w_i e_j - w_j e_i)    \big)$. Hence by Lemma \ref{imp}, $\gamma$ can be written as the following product 
\begin{eqnarray*}
\prod_{i < j} \big(  I_{2n} +  ab x_{ij} ((-1)^j w_i e_{\sigma(j)} - (-1)^i w_i e_{\sigma(j)})^t \widetilde{w} + ab x_{ij} w (w_i e_j - w_j e_i)  \big)  \big( I_{2n} + y w \widetilde{w} \big).
\end{eqnarray*}
where $y \in I$. By Lemma \ref{short-rt-sp}, $\big( I_{2n} + y w \widetilde{w} \big) \in \ESp_{2n}(I)$ and by Lemma \ref{long-rt-sp}, 
$\big(  I_{2n} +  x_{ij} ((-1)^j w_i e_{\sigma(j)} - (-1)^i w_i e_{\sigma(j)})^t \widetilde{w} + x_{ij} w (w_i e_j - w_j e_i)  \big) \in \ESp_{2n}(I)$, for $n \ge 3$. Therefore, $\gamma \in \ESp_{2n}(I)$.
\end{proof}

\begin{thm} \label{tits-symp}
Let $R$ be a ring with $R=2R$ and $I$ be an ideal of $R$. Then $\ESp_{2n}(R, I^2) \subseteq \ESp_{2n}(I)$, for
$n \ge 3$.
\end{thm}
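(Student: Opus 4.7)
The plan is to reduce the theorem to checking a single conjugate of each generator of $\ESp_{2n}(I^2)$ and then to apply Lemmas~\ref{short-rt-sp} and \ref{long-rt-sp(2)}. Since $\ESp_{2n}(R, I^2)$ is by definition the normal closure of $\ESp_{2n}(I^2)$ in $\ESp_{2n}(R)$ and conjugation distributes over products, it suffices to show that $\beta\, g\, \beta^{-1} \in \ESp_{2n}(I)$ for every $\beta \in \ESp_{2n}(R)$ and every generator $g$ of $\ESp_{2n}(I^2)$. The generator $se_{ij}(z)$ is additive in $z$ (because the relevant rank one or rank two matrix squares to zero), so it further suffices to take $g = se_{ij}(ab)$ with $a, b \in I$.

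Next, I would recast each $se_{ij}(ab)$ in the bilinear shape demanded by the lemmas. Using the identity $\widetilde{e_k} = (-1)^{k+1} e_{\sigma(k)}^t$, one checks that $se_{ij}(ab) = I_{2n} + ab\, A$, where either (short root case, $i = \sigma(j)$) $A = (-1)^{i+1}\, v\widetilde{v}$ with $v = e_i$, or (long root case, $i \neq \sigma(j)$) $A = v\widetilde{w} + w\widetilde{v}$ with $v = e_i$ and $w = (-1)^j e_{\sigma(j)}$. In the long root case, $v$ and $w$ are unimodular standard basis vectors, and $\widetilde{v}\, w = 0$ because $i \neq j$ forces $\sigma(i) \neq \sigma(j)$.

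The conjugation step is then structural. Because $\beta \in \ESp_{2n}(R)$ satisfies $\beta^t \psi_n \beta = \psi_n$, we have $\psi_n \beta^{-1} = \beta^t \psi_n$, and hence $\beta\,(v\widetilde{w})\,\beta^{-1} = (\beta v)\widetilde{\beta w}$ for any $v, w \in R^{2n}$. Writing $v' = \beta v$ and $w' = \beta w$, the conjugate $\beta\, g\, \beta^{-1}$ becomes $I_{2n} + a b'\, v'\widetilde{v'}$ in the short root case (after absorbing the sign into $b' = \pm b \in I$) and $I_{2n} + ab\,(v'\widetilde{w'} + w'\widetilde{v'})$ in the long root case. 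The vectors $v', w'$ remain unimodular since $\beta$ is invertible, and $\widetilde{v'}\, w' = v^t \beta^t \psi_n \beta\, w = v^t \psi_n w = \widetilde{v}\, w = 0$. Thus Lemma~\ref{short-rt-sp} handles the short root conjugate and Lemma~\ref{long-rt-sp(2)} (using $n \geq 3$) handles the long root conjugate, placing each inside $\ESp_{2n}(I)$.

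The main obstacle is the opening algebraic step: expressing $se_{ij}(ab)$ in the precise bilinear form $I_{2n} + ab(v\widetilde{w} + w\widetilde{v})$ or $I_{2n} + ab\, v\widetilde{v}$ with the correct unimodularity and orthogonality data, which demands careful sign bookkeeping against the antisymmetric form $\psi_n$. Once this reformulation is established, the symplectic invariance $\beta^t \psi_n \beta = \psi_n$ transports the structural data through conjugation automatically, and the lemmas previously proved in this section close the argument.
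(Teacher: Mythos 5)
Your proposal is correct and follows essentially the same route as the paper: reduce to conjugates $g\,se_{ij}(ab)\,g^{-1}$, recognize these as $I_{2n}+ab\,v\widetilde{v}$ (short root) or $I_{2n}+ab\,v\widetilde{w}+ab\,w\widetilde{v}$ (long root) with $v,w$ the relevant columns of $g$, and invoke Lemmas~\ref{short-rt-sp} and \ref{long-rt-sp(2)}. Your sign bookkeeping and the explicit verification that conjugation preserves unimodularity and the orthogonality $\widetilde{v}w=0$ are details the paper leaves implicit, and they check out.
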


\begin{proof} 
An element of $\ESp_{2n}(R, I^2)$ is product of elements of the form $g se_{ij}(ab) g^{-1}$, where $g \in \ESp_{2n}(R)$ and $a,b \in I$. Here either $i = \sigma(j)$ or $i \ne \sigma(j)$.

When $i = \sigma(j)$, we have
\begin{eqnarray*}
g ~ se_{ij}(ab) ~ g^{-1} = I_{2n} + ab v \widetilde{v},
\end{eqnarray*}
where $v$ is the $i$-th column of $g$. Hence by Lemma \ref{short-rt-sp}, $g ~ se_{ij}(ab) ~ g^{-1} \in \ESp_{2n}(I)$.

When $i \ne \sigma(j)$, we have
\begin{eqnarray*}
g ~ se_{ij}(ab) ~ g^{-1} = I_{2n} + ab v \widetilde{w} + ab w \widetilde{v},
\end{eqnarray*}
where $v$ is the $i$-th column of $g$ and $w$ is the $\sigma(j)$-th column of $g$. Hence by Lemma \ref{long-rt-sp(2)},
$g ~ se_{ij}(ab) ~ g^{-1} \in \ESp_{2n}(I)$.
\end{proof}

\section{\large{The case of linear transvections groups}}

In this section we deal with the linear transvection group, and prove an analogue of Tits's result in this set-up.
We begin by recalling some basic notions. Following H. Bass one  defines a linear transvection of a finitely generated $R$-module as follows:

\begin{de}
{\rm Let $M$ be a finitely generated $R$-module. Let $q \in M$ and
  $\pi \in M^*= {\rm Hom}(M,R)$, with $\pi(q) = 0$. Let $\pi_q(p) :=
  \pi(p)q$. An automorphism of the form $1+\pi_q$ is called a {\it
   linear transvection} of $M$, if either $q \in \Um(M)$ or $\pi \in
  \Um(M^*)$. The collection of all linear transvections of $M$ is denoted by
  $\Trans(M)$. This forms a subgroup of $\Aut(M)$.}
\end{de}

\begin{de}
{\rm Let $M$ be a finitely generated $R$ module. Automorphisms of
  $N= (R \oplus M)$ of the form
\begin{eqnarray*}
(a, p)^t & \mapsto & (a, p+ax)^t,
\end{eqnarray*}
or of the form
\begin{eqnarray*}
(a, p)^t & \mapsto & (a+ \tau(p), p)^t,
\end{eqnarray*}
where $x \in M$ and $\tau \in M^*$ are called {\it elementary
  linear transvections} of $N$. Let us denote the first automorphism by $E(x)$
and the second one by $E^*(\tau)$.  It can be verified that these are
transvections of $N$.  Indeed, let us consider $\pi(t,y)=t$, $q=(0,x)$ to
get $E(x)$, and consider $\pi(a,p)= \tau(p)$, where $\tau \in
M^*$, $q=(1,0)$ to get $E^*(\tau)$. The subgroup of $\Trans(N)$
generated by elementary transvections is denoted by $\ETrans(N)$.}
\end{de}

\begin{de} {\rm Let $I$ be an ideal of $R$. Elementary
    transvections of $N=(R \oplus M)$ of the form $E(x), E^*(\tau)$,
    where $x \in IM$ and $\tau \in IM^*$ are called {\it relative
      elementary linear transvections} with respect to an ideal $I$, and the
    group generated by them is denoted by $\ETrans(IN)$. The normal
    closure of $\ETrans(IN)$ in $\ETrans(N)$ is denoted by
    $\ETrans(N,IN)$.}
\end{de}

\begin{lem} \label{linear,free,true-rel} Let $I$ be an ideal of $R$, $M$
  be a free $R$ module of rank $n \ge 2$, and $N=(R \oplus M)$. Then
$\ETrans(IN) = \E_{n+1}^1(I)$.
\end{lem}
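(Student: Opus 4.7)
The plan is to fix a basis of $M$, translate the elementary linear transvections $E(x)$ and $E^*(\tau)$ into products of standard elementary matrices in $\E_{n+1}(R)$, and then match those products with the distinguished generators of $\E_{n+1}^1(I)$. Since $M$ is free of rank $n$, I would pick a basis $f_2, \ldots, f_{n+1}$ of $M$ with dual basis $f_2^*, \ldots, f_{n+1}^*$ of $M^*$. Together with the distinguished summand $R$, this identifies $N = R \oplus M$ with $R^{n+1}$ so that the standard basis of $R^{n+1}$ corresponds to $(1,0),(0,f_2),\ldots,(0,f_{n+1})$, and induces $\Aut(N) \cong \GL_{n+1}(R)$.

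For the inclusion $\ETrans(IN) \subseteq \E_{n+1}^1(I)$, I would compute the matrix of each generator. Given $x = \sum_{i=2}^{n+1} x_i f_i \in IM$ with $x_i \in I$, the automorphism $E(x) : (a,p)^t \mapsto (a, p+ax)^t$ has matrix $I_{n+1} + \sum_{i=2}^{n+1} e_{i1}(x_i)$. Because $e_{i1}(\alpha)\,e_{j1}(\beta) = 0$ for all $i,j \ge 2$, the factors $E_{i1}(x_i)$ commute pairwise and
\[
E(x) = \prod_{i=2}^{n+1} E_{i1}(x_i) \in \E_{n+1}^1(I).
\]
An entirely parallel computation, using $e_{1i}(\alpha)\,e_{1j}(\beta) = 0$ for $i,j \ge 2$, shows that for $\tau = \sum_{i=2}^{n+1} \tau_i f_i^* \in IM^*$ with $\tau_i \in I$, one gets $E^*(\tau) = \prod_{i=2}^{n+1} E_{1i}(\tau_i) \in \E_{n+1}^1(I)$. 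Since the $E(x)$ and $E^*(\tau)$ generate $\ETrans(IN)$, this direction follows.

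For the reverse inclusion $\E_{n+1}^1(I) \subseteq \ETrans(IN)$, I would observe that each distinguished generator of $\E_{n+1}^1(I)$ is literally one of the elementary linear transvections already considered: for $y \in I$ and $2 \le j \le n+1$, we have $E_{j1}(y) = E(y f_j)$ with $y f_j \in IM$, and for $x \in I$ and $2 \le i \le n+1$, we have $E_{1i}(x) = E^*(x f_i^*)$ with $x f_i^* \in IM^*$. Both lie in $\ETrans(IN)$ by definition.

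No substantive obstacle arises; the lemma is essentially a book-keeping check once a basis is fixed. The one point that requires attention is the vanishing of the products $e_{i1}(\alpha) e_{j1}(\beta)$ and $e_{1i}(\alpha) e_{1j}(\beta)$, which is what allows each $E(x)$ and $E^*(\tau)$ to be written as an honest commuting product of the standard generators of $\E_{n+1}^1(I)$.
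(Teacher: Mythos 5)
Your proposal is correct and follows essentially the same route as the paper: identify $E(x)$ and $E^*(\tau)$ with the matrices $\bigl(\begin{smallmatrix} 1 & 0 \\ x & I_n \end{smallmatrix}\bigr)$ and $\bigl(\begin{smallmatrix} 1 & y^t \\ 0 & I_n \end{smallmatrix}\bigr)$, factor them as products of the generators $E_{i1}$, $E_{1i}$ with entries in $I$, and note that those generators are themselves special cases of $E(x)$, $E^*(\tau)$. Your explicit justification via $e_{i1}(\alpha)e_{j1}(\beta)=0$ is the same book-keeping the paper leaves implicit.
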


\begin{proof}
In this case $\ETrans(IN)$ is generated by the elements of the form
$\big( \begin{smallmatrix} 1 & 0 \\ x & I_n \end{smallmatrix} \big)$, and 
$\big( \begin{smallmatrix} 1 & y^t  \\ 0 & I_n \end{smallmatrix} \big)$,
where $x, y \in I^n (\subseteq R^n)$. Hence, $\ETrans(IN) \subseteq
\E_{n+1}^1(I)$. Note that generators of $\E_{n+1}^1(I)$ 
belongs $\ETrans(IN)$, when $N$ is free, and hence $\E_{n+1}^1(I) 
\subseteq \ETrans(IN)$.
\end{proof}

\begin{lem} \label{linear,free,rel} Let $I$ be an ideal of $R$ and $M$
  be a free $R$ module of rank $n \ge 2$, and $N=(R \oplus M)$. Then
$\ETrans(N,IN) = \E_{n+1}(R,I)$.
\end{lem}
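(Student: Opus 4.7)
The plan is to identify both sides with subgroups of $\E_{n+1}(R)$ and then compare their normal closures. Taking $I=R$ in (the proof of) Lemma \ref{linear,free,true-rel}, the matrix descriptions of $E(x)$ and $E^*(\tau)$ give $\ETrans(N)=\E_{n+1}^1(R)$. Because $n+1\ge 3$, the commutator identity
\[
E_{ij}(c)=[E_{i1}(c),E_{1j}(1)]
\]
(with $i,j\ge 2$ distinct) shows that every standard elementary generator of $\E_{n+1}(R)$ already lies in $\E_{n+1}^1(R)$, so in fact $\ETrans(N)=\E_{n+1}(R)$. This is the crucial preliminary identification.

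Given this, the inclusion $\ETrans(N,IN)\subseteq \E_{n+1}(R,I)$ is almost automatic. By Lemma \ref{linear,free,true-rel}, $\ETrans(IN)=\E_{n+1}^1(I)\subseteq \E_{n+1}(I)$, and taking normal closures inside the common ambient group $\ETrans(N)=\E_{n+1}(R)$ preserves the containment; the right-hand side is the definition of $\E_{n+1}(R,I)$.

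For the reverse inclusion I would exploit normality. Since $\ETrans(N,IN)$ is by construction normal in $\ETrans(N)=\E_{n+1}(R)$, and $\E_{n+1}(R,I)$ is the smallest normal subgroup of $\E_{n+1}(R)$ containing $\E_{n+1}(I)$, it suffices to show $E_{ij}(x)\in \ETrans(N,IN)$ for every $1\le i\neq j\le n+1$ and every $x\in I$. When $i=1$ or $j=1$ this is immediate, as $E_{ij}(x)\in \E_{n+1}^1(I)=\ETrans(IN)$. When both $i,j\neq 1$, I would write
\[
E_{ij}(x)=[E_{i1}(x),E_{1j}(1)]=E_{i1}(x)\cdot\bigl(E_{1j}(1)\,E_{i1}(-x)\,E_{1j}(-1)\bigr),
\]
so $E_{ij}(x)$ is a product of an element of $\ETrans(IN)$ with an $\ETrans(N)$-conjugate of an element of $\ETrans(IN)$; both summands lie in $\ETrans(N,IN)$.

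The main (and only mild) obstacle is ensuring the "third-index" commutator trick is available, which is precisely what forces the rank hypothesis $n\ge 2$ (i.e.\ $n+1\ge 3$); without it, neither $\E_{n+1}^1(R)=\E_{n+1}(R)$ nor the conjugation argument for $i,j\neq 1$ goes through. Apart from that, the proof reduces to a bookkeeping comparison of two normal closures inside $\E_{n+1}(R)$.
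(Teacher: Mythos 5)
Your argument is correct: once $\ETrans(N)=\E_{n+1}^1(R)=\E_{n+1}(R)$ is established (valid because $n+1\ge 3$), both groups are normal closures in the same ambient group, the inclusion $\ETrans(N,IN)\subseteq\E_{n+1}(R,I)$ follows from $\E_{n+1}^1(I)\subseteq\E_{n+1}(I)$, and the commutator trick $E_{ij}(x)=[E_{i1}(x),E_{1j}(1)]$ puts every generator of $\E_{n+1}(I)$ inside the normal closure of $\ETrans(IN)$, which settles the reverse inclusion. The paper itself only cites Lemma 4.5 of \cite{cr2} for this statement, and that argument runs along essentially the same lines, so your write-up is a correct self-contained version of the cited proof (modulo the harmless slip of calling the two factors ``summands'').
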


\begin{proof}
See proof of Lemma 4.5 in \cite{cr2}.
\end{proof}

\begin{lem} \label{commE(I)} Let $R$ be a ring, $I$ be an ideal of $R$,
and $n \ge 3$. Let $\varepsilon =
  \varepsilon_1 \ldots \varepsilon_r$,
  where each $\varepsilon_k$ is an elementary generator of $\E_n^1(I)$.  If
  $E_{ij}(a(X)) \in \E_n^1(I[X])$, then
\begin{eqnarray*}
\varepsilon ~ E_{ij}(Y^{4^r} a(X)) ~ \varepsilon^{-1} 
&=& \prod_{t=1}^s E_{i_t j_t}(Y b_t(X,Y)),
\end{eqnarray*}
where $b_t (X,Y) \in I[X, Y]$, and either $i_t =1$ or $j_t =1$.
\end{lem}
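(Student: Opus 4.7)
The plan is to proceed by induction on $r$. The base case $r = 0$ is immediate: $\varepsilon = I_n$ and $Y^{4^0} = Y$, so $E_{ij}(Y \cdot a(X))$ is itself a single factor of the required form, with $s = 1$ and $b_1 = a$, and the index condition is the one inherited from the hypothesis $E_{ij}(a(X)) \in \E_n^1(I[X])$.

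For the inductive step, I would write $\varepsilon = \varepsilon_1 \eta$ with $\eta = \varepsilon_2 \cdots \varepsilon_r$ a word of length $r - 1$, so that
\[
\varepsilon \, E_{ij}(Y^{4^r} a(X)) \, \varepsilon^{-1} = \varepsilon_1 \bigl( \eta \, E_{ij}(Y^{4^r} a(X)) \, \eta^{-1} \bigr) \, \varepsilon_1^{-1}.
\]
Interpreting $Y^{4^r} = (Y^4)^{4^{r-1}}$ and applying the inductive hypothesis to $\eta$ after the ring substitution $Y \mapsto Y^4$ (a homomorphism of $R[X,Y]$ fixing $R[X]$) gives
\[
\eta \, E_{ij}(Y^{4^r} a(X)) \, \eta^{-1} = \prod_t E_{i_t j_t}(Y^4 c_t(X, Y))
\]
with $c_t \in I[X, Y]$ and $i_t = 1$ or $j_t = 1$. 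The problem then reduces to the ``base $r = 1$'' situation, performed this time over the ring $R[Y]$ with ideal $I[Y]$: conjugate each single factor $E_{i_t j_t}(Y^4 c_t(X, Y))$ by the single generator $\varepsilon_1 \in \E_n^1(I)$ and write the result as a product of $\E_n^1$-generators whose coefficients lie in $Y \cdot I[X, Y]$.

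This single-generator case is dispatched by a case analysis on the indices of $\varepsilon_1 \in \{E_{1k}(\beta), E_{k1}(\beta)\}$ and of the elementary factor being conjugated. In the benign subcases the two generators either commute (so the conjugate is $E_{i_t j_t}(Y \cdot Y^3 c_t)$ directly) or their commutator is a single elementary matrix $E_{pq}(\pm Y^4 c_t \beta)$ with at least one of $p, q$ equal to $1$. When the commutator is $E_{pq}(\pm Y^4 c_t \beta)$ with neither $p$ nor $q$ equal to $1$, the Steinberg identity
\[
E_{pq}(Y^4 u v) = [E_{p1}(Y^2 u), E_{1q}(Y^2 v)],
\]
valid for three distinct indices $1, p, q$ and hence available because $n \ge 3$, expands it as a product of four $\E_n^1$-generators with coefficients $\pm Y^2 c_t \in Y \cdot I[X,Y]$ and $\pm Y^2 \beta \in Y \cdot I[Y]$.

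The principal obstacle is the ``inverse pair'' subcase $\varepsilon_1 = E_{1k}(\beta)$ acting on $E_{k1}(Y^4 c_t)$ (and its mirror image), where the commutator $[E_{1k}(\beta), E_{k1}(Y^4 c_t)]$ is not a single elementary matrix. My plan here is to again use the surplus of indices afforded by $n \ge 3$: pick $h \in \{2, \dots, n\} \setminus \{k\}$ and rewrite $E_{k1}(Y^4 c_t) = [E_{kh}(Y^2 c_t), E_{h1}(Y^2)]$ before conjugating, so that the outer conjugation by $\varepsilon_1$ distributes across the four pieces of this commutator and every resulting sub-conjugation falls into a previously-handled non-inverse subcase. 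The delicate step is the bookkeeping: the auxiliary $E_{kh}(\pm Y^2 c_t)$- and $E_{h1}(\pm Y^2)$-factors either cancel in pairs or combine (via another application of the Steinberg identity displayed above) into $\E_n^1$-generators with coefficients in $Y \cdot I[X, Y]$. This is carried out in the spirit of Nica's argument for Theorem~\ref{tits-lin}.
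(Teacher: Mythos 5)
Your proposal is correct and follows essentially the same route as the paper: induction on $r$ with the substitution $Y\mapsto Y^4$ feeding the inductive hypothesis into a single-generator case analysis, the Steinberg identity $E_{pq}(Y^4uv)=[E_{p1}(Y^2u),E_{1q}(Y^2v)]$ (available since $n\ge 3$) to repair commutators landing off the first row and column, and the decomposition $E_{k1}(Y^4c)=[E_{kh}(Y^2c),E_{h1}(Y^2)]$ to handle the inverse-pair conjugation — these are exactly the paper's Cases 1--4 and its inductive step. The "delicate bookkeeping" you defer is carried out explicitly in the paper's Case 3 and does close up as you describe, with the auxiliary off-diagonal factors cancelling or regrouping into commutators of $\E_n^1$-type generators with coefficients in $Y\cdot I[X,Y]$.
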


\begin{proof} 
We prove the result using induction on $r$, where $\varepsilon$ is
product of $r$ many elementary generators. Let $r=1$ and $\varepsilon
= E_{pq}(c)$, where $c \in I$. Also, for the elementary generator $E_{ij}(Y^{4^r} a(X))$
we assume $i=1$.

{\it Case $($1$)$:} Let $(p,q)=(1,j)$. In this case we get that 
\begin{eqnarray*}
E_{1j}(c) ~ E_{1j}(Y^4 a(X)) ~ E_{1j}(-c) &=& E_{1j}(Y^4 a(X)).
\end{eqnarray*}

{\it Case $($2$)$:} Let $(p,q) = (1,k)$ and $k \ne j$. In this case we get that 
\begin{eqnarray*}
E_{1k}(c) ~ E_{1j}(Y^4 a(X)) ~ E_{1k}(-c) &=& E_{1j}(Y^4 a(X)).
\end{eqnarray*}

{\it Case $($3$)$:} Let $(p,q)=(j, 1)$. In this case we get that 
\begin{eqnarray*}
&& E_{j1}(c) ~ E_{1j}(Y^4 a(X)) ~ E_{j1}(-c) \\
&=& E_{j1}(c) ~ [E_{1k}(Y^2 a(X)), E_{kj}(Y^2)] E_{j1}(-c) \\
&=& [E_{jk}(Y^2 ca(X)) ~ E_{1k}(Y^2 a(X)), ~ E_{k1}(-Y^2c) ~ E_{kj}(Y^2)] \\
&=& E_{jk}(Y^2 ca(X)) ~ E_{1k}(Y^2 a(X)) ~ E_{k1}(-Y^2c) ~ E_{kj}(Y^2) ~ E_{1k}(-Y^2 a(X))\\
&& E_{jk}(-Y^2 ca(X)) ~ E_{kj}(-Y^2) ~ E_{k1}(Y^2c) \\
&=& E_{jk}(Y^2 ca(X)) ~ E_{1k}(Y^2 a(X)) ~ E_{k1}(-Y^2c) ~ E_{1k}(-Y^2 a(X)) ~ E_{1k}(Y^2 a(X)) \\
&& E_{kj}(Y^2) ~ E_{1k}(-Y^2 a(X))  ~ E_{kj}(-Y^2) ~ E_{kj}(Y^2) ~ E_{jk}(-Y^2 ca(X)) \\
&& E_{kj}(-Y^2) ~ E_{k1}(Y^2c) \\
&=& E_{jk}(Y^2 ca(X)) ~ E_{1k}(Y^2 a(X)) ~ E_{k1}(-Y^2c) ~ E_{1k}(-Y^2 a(X)) ~ E_{1j}(Y^4 a(X)) \\
&& E_{kj}(Y^2) ~ [E_{j1}(-Y a(X)), ~ E_{1k}(Yc)] ~ E_{kj}(-Y^2) ~ E_{k1}(Y^2x) \\
&=& [E_{j1}(Y a(X)), ~E_{1k}(Yc)] ~ E_{1k}(Y^2 a(X)) ~ E_{k1}(-Y^2c) ~ E_{1k}(-Y^2 a(X)) \\
&& E_{1j}(Y^4 a(X)) ~ [E_{k1}(-Y^3 a(X)) ~ E_{j1}(-Y a(X)), ~ E_{1j}(-Y^3 c) ~ E_{1k}(Yc)]  \\
&& E_{k1}(Y^2c). 
\end{eqnarray*}

{\it Case $($4$)$:} Let $(p,q)=(k,1)$, where $k \ne j$. In this case we get that
\begin{eqnarray*}
&& E_{k1}(c) ~ E_{1j}(Y^4 a(X)) ~ E_{k1}(-c) \\
&=& E_{kj}(Y^4 c a(X)) ~ E_{1j}(Y^4 a(X))\\
&=& [ E_{k1}(Y^2 a(X)), ~ E_{1j}(Y^2c)] ~ E_{1j}(Y^4 a(X)).
\end{eqnarray*}

Hence the result is true when $i=1$ and $\varepsilon$ is an elementary
generator. Carrying out similar computations one can show that the result is true
when $j=1$ and $\varepsilon$ is an elementary generator.
Let us assume that the result is true when $\varepsilon$ is product of $r-1$
many elementary generators, i.e, $\varepsilon_2 \ldots \varepsilon_r ~
E_{ij}(Y^{4^{r-1}} a(X)) ~ \varepsilon_r^{-1} \ldots
\varepsilon_2^{-1} = \prod_{t=1}^k E_{p_t q_t}(Y d_t(X, Y))$, where $d_t (X, Y) \in I[X, Y]$,
and either $p_t =1$ or $q_t =1$. We now establish the result when $\varepsilon$ is product of 
$r$ many elementary generators. We have
\begin{eqnarray*}
&& \varepsilon ~ E_{ij}(Y^{4^r} a(X)) ~ \varepsilon^{-1}  \\
& = & \varepsilon_1 \varepsilon_2 \ldots \varepsilon_r ~ E_{ij}(Y^{4^r} a(X)) 
~ \varepsilon_r^{-1} \ldots \varepsilon_2^{-1} \varepsilon_1^{-1} \\
& = & \varepsilon_1 ~ \big( \prod_{t=1}^k E_{p_t q_t}(Y^4 d_t(X, Y)) \big) ~ \varepsilon_1^{-1}  \\
& = & \prod_{t=1}^k \varepsilon_1 ~ E_{p_t q_t}(Y^4 d_t(X, Y)) ~ \varepsilon_1^{-1} \\
& = & \prod_{t=1}^s E_{i_t j_t}(Y b_t(X, Y)). 
\end{eqnarray*}

To get the last equality one needs to repeat the computation which was
done for a single elementary generator. Note that here $b_t(X,Y) \in I[X,Y]$, and  
either $i_t=1$ or $j_t=1$.
\end{proof}

We now establish a dilation principle for the relative elementary linear transvection
group $\ETrans(IN)$. Note that similar dilation
principles for $\ETrans(N)$ and  $\ETrans(N, IN)$ were proved in \cite{bbr}, Proposition
3.1 and in \cite{cr2}, Lemma 4.6 respectively.

\begin{lem} \label{linear,dil,true-rel} Let $I$ be an ideal of $R$ and $M$ 
be a finitely generated module of $R$. Suppose that $a$ is a
non-nilpotent element of $R$ such that $M_a$ is a free $R_a$-module of
rank $n \ge 2$. Let $N=(R \oplus M)$. Let $\alpha(X) \in \Aut(N[X])$
with $\alpha(0)=Id.$, and $\alpha(X)_a \in
\ETrans(IN_a[X])$. Then there exists $\alpha^*(X) \in
\ETrans(IN[X])$, with $\alpha^*(0) = Id.$, such that
$\alpha^*(X)$ localises to $\alpha(bX)$, for $b \in (a^d)$, $d \gg 0$.
\end{lem}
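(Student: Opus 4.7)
The plan is to adapt the dilation arguments of \cite{bbr} Proposition 3.1 and \cite{cr2} Lemma 4.6 to the true-relative group $\ETrans(IN)$, using Lemma \ref{commE(I)} over $R_a$ to redistribute powers of a dummy variable $Y$ and control denominators. Since $M_a$ is free of rank $n \ge 2$, Lemma \ref{linear,free,true-rel} gives $\ETrans(I N_a[X]) = \E_{n+1}^1(I_a[X])$; writing $\alpha(X)_a = \varepsilon_1(X) \cdots \varepsilon_r(X)$ as a product of elementary generators and clearing to a common denominator $a^N$, each $\varepsilon_k(X) = E_{i_k j_k}(f_k(X)/a^N)$ has $f_k(X) \in I[X]$ and either $i_k = 1$ or $j_k = 1$.

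Next, splitting off constants via $\varepsilon_k(X) = \varepsilon_k(0)\,\varepsilon_k'(X)$, where $\varepsilon_k'(X) = E_{i_k j_k}((f_k(X) - f_k(0))/a^N)$ satisfies $\varepsilon_k'(0) = Id$, and pushing the constant factors $\varepsilon_k(0)$ to the left, the hypothesis $\alpha(0) = \varepsilon_1(0) \cdots \varepsilon_r(0) = Id$ collapses the expression to $\alpha(X)_a = \tilde\varepsilon_1'(X) \cdots \tilde\varepsilon_r'(X)$, where each $\tilde\varepsilon_k'(X)$ is the conjugate of $\varepsilon_k'(X)$ by the constant word $\varepsilon_{k+1}(0) \cdots \varepsilon_r(0)$, a product of at most $r$ elementary generators of $\E_{n+1}^1(I_a)$. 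Substituting $X \mapsto Y^{4^r} X$, the generator $\varepsilon_k'(Y^{4^r} X)$ carries a $Y^{4^r}$-prefactor (since $\varepsilon_k'$ vanishes at $X = 0$), and Lemma \ref{commE(I)} rewrites each $\tilde\varepsilon_k'(Y^{4^r} X)$ as a product $\prod_t E_{p_t q_t}(Y\, c_t(X, Y))$ with $c_t \in I_a[X, Y]$ and $p_t = 1$ or $q_t = 1$; concatenating yields
$$\alpha(Y^{4^r} X)_a = \prod_{s} E_{p_s q_s}(Y\, c_s(X, Y)).$$

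Choosing $M$ large enough to clear every denominator occurring in $c_s(X, a^M)$ and setting $Y = a^M$, the right-hand side has all entries in $I[X]$ and lifts, via the identification of Lemma \ref{linear,free,true-rel} extended to $N[X]$, to an element $\alpha^\sharp(X) \in \ETrans(IN[X])$ with $\alpha^\sharp(X)_a = \alpha(a^{4^r M} X)_a$. Replacing $\alpha^\sharp$ by $\alpha^*(X) := \alpha^\sharp(0)^{-1}\,\alpha^\sharp(X)$, which remains in $\ETrans(IN[X])$ since $\alpha^\sharp(0) \in \ETrans(IN)$, enforces $\alpha^*(0) = Id$ while preserving $\alpha^*(X)_a = \alpha(bX)$ with $b = a^{4^r M} \in (a^d)$ for $d \gg 0$.

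The main obstacles I anticipate are twofold: first, bookkeeping the powers of $Y$ so that Lemma \ref{commE(I)} applies uniformly across the $\tilde\varepsilon_k'$, noting that the uniform choice $Y^{4^r}$ is harmless since excess $Y$-powers can be absorbed into the $c_t$; and second, verifying that once $Y = a^M$ is substituted, the $Y$-prefactor in each $E_{p_s q_s}(Y c_s(X, Y))$ absorbs the denominator implicit in the free identification $N_a \cong R_a^{n+1}$, so that each factor in the final product lifts to a genuine elementary transvection of $N[X]$ and not merely of $N_a[X]$.
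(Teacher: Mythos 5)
Your proposal follows essentially the same route as the paper's proof: identify $\ETrans(IN_a[X])$ with $\E_{n+1}^1(I_a[X])$ via Lemma \ref{linear,free,true-rel}, write $\alpha(X)_a$ as a product of conjugates of generators vanishing at $X=0$, apply Lemma \ref{commE(I)} after the substitution $X \mapsto Y^{4^r}X$, and then dilate $Y$ by a high power of $a$ to clear both the denominators of the coefficients and those coming from the free identification $N_a \cong R_a^{n+1}$ before lifting to $\ETrans(IN[X])$. Your closing normalization $\alpha^*(X) = \alpha^\sharp(0)^{-1}\alpha^\sharp(X)$ is a legitimate (and arguably more careful) way to secure $\alpha^*(0)=Id$, which the paper simply asserts is clear from the construction.
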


\begin{proof} 
Using Lemma \ref{linear,free,true-rel} we get that 
$\ETrans(IN_a[X]) = \E_{n+1}^1(I_a[X])$. Therefore, one can write $\alpha(X)_a = \prod_t
\gamma_t E_{i_t j_t}(X f_t(X)) \gamma_t^{-1}$, where $\gamma_t \in \E_n^1(I_a)$, 
$f_t(X) \in I_a[X]$, and either $i_t =1$ or $j_t =1$. Using Lemma \ref{commE(I)} we write
\begin{eqnarray*}
\alpha(Y^{4^r}X)_a &=& \prod_k E_{i_k j_k}(Y g_k(X,Y)) \\
&=& \prod_k E_{i_k j_k}(Y h_k(X,Y)/ a^m),
\end{eqnarray*}
where $h_k(X,Y) \in I[X,Y]$, and either $i_k=1$ or $j_k=1$. Note that here $m$ is the maximum 
power of $a$ appearing in the denominators of $g_k(X, Y)$.

Since $N_a$ is a free $R_a$ module we have $N_a \cong R_a^{n+1} \cong
N_a^*$. Let $p_1, \ldots, p_{n+1}$ be the standard
basis of $N_a$, $p_1^*, \ldots, p_{n+1}^*$ be the standard basis of
$N_a^*$, and $e_1, \ldots, e_{n+1}$ be the standard basis of
$R_a^{n+1}$. Let $s \ge 0$ be an integer such that $\widetilde{p_i} =
a^s p_i \in N$, and $\widetilde{p_i^*} = a^s p_i^* \in N^*$, for all
$i$.  Note that
\begin{eqnarray*}
\alpha(Y^{4^r} X)_a &=& \prod_k \big( Id. + (Y h_k(X,Y)/ a^m) ~ e_{i_k} \cdot ~ e_{j_k}^t \big) \\
&=& \prod_k \big( Id. + (Y h_k(X,Y)/ a^m) ~ p_{i_k} \cdot ~ p_{j_k}^* \big)
\end{eqnarray*}

Let us set $d'=2s+m$. Now,
\begin{eqnarray*}
\alpha((a^{d'}Y)^{4^r} X)_a &=& \prod_k \big( Id. + a^{2s}Y
h_k(X,a^{d'}Y)) ~ p_{i_k} \cdot ~ p_{j_k}^* \big) \\ &=& \prod_k \big( Id. +
Y h_k(X,a^{d'}Y)) ~ a^s p_{i_k} \cdot ~ a^s p_{j_k}^* \big).
\end{eqnarray*}

Substituting $Y=1$ we get that $\alpha(a^d X)_a = \prod_k \big( Id. + 
h'_k(X) a^s p_{i_k} \cdot a^s p_{j_k}^* \big)$. Let us set $\alpha^*(X) =
\prod_k \big( Id. +  h'_k(X) \widetilde{p_{i_k}}
\cdot \widetilde{p_{j_k}^*} \big)$. Note that $\alpha^*(X)$ belongs to
$\ETrans(IN[X])$. It is clear from the construction 
that $\alpha^*(0)=Id.$ and $\alpha^*(X)$ localises to $\alpha(bX)$, 
for $b \in (a^d)$, $d \gg 0$.
\end{proof}

Next we establish a Local-Global principle for $\ETrans(IN)$.

\begin{lem} \label{linear,LG,true-rel} Let $I$ be an ideal of $R$ and let
  $M$ be a finitely generated projective $R$-module of rank $n \ge 2$.
  Let $N=(R \oplus M)$. Let $\alpha(X) \in \Aut(N[X])$, with $\alpha(0)
  =Id$. If for each maximal ideal $\gm$ of $R$, $\alpha(X)_\gm \in
  \ETrans(IN_\gm[X])$, then $\alpha(X) \in
  \ETrans(IN[X])$.
\end{lem}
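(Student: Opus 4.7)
The plan is to execute the standard Quillen--Suslin patching argument, with Lemma \ref{linear,dil,true-rel} as the main technical input. Define
$$J \;=\; \{\, c \in R \;:\; \alpha(cX) \in \ETrans(IN[X]) \,\}.$$
The goal is to show that $J = R$; specialising $c = 1$ then yields $\alpha(X) \in \ETrans(IN[X])$.

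First I would verify the \emph{local step}: for each maximal ideal $\gm$ of $R$, some element of $J$ lies outside $\gm$. Because $M$ is finitely generated projective of rank $n$, there is $s_1 \in R \setminus \gm$ such that $M_{s_1}$ is free of rank $n$. The hypothesis $\alpha(X)_\gm \in \ETrans(IN_\gm[X])$ expresses $\alpha(X)_\gm$ as a finite product of elementary transvections, involving only finitely many denominators, so one can find $s_2 \in R \setminus \gm$ such that $\alpha(X)_{s_1 s_2} \in \ETrans(IN_{s_1 s_2}[X])$. Setting $a = s_1 s_2$, Lemma \ref{linear,dil,true-rel} produces $\alpha^*(X) \in \ETrans(IN[X])$ with $\alpha^*(0) = Id$ and $\alpha^*(X)_a = \alpha(bX)_a$ for some $b \in (a^d)$. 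A Quillen-type splitting argument applied to the automorphism $\alpha^*(X)^{-1}\alpha(bX) \in \Aut(N[X])$, which becomes the identity after inverting $a$ and equals $Id$ at $X = 0$, then shows that a suitable power of $b$ lies in $J$; in particular $J \not\subseteq \gm$.

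Next I would verify that $J$ is an ideal of $R$. Clearly $0 \in J$ since $\alpha(0) = Id$, and if $c \in J$ then $rc \in J$ for every $r \in R$ by reparametrising $X \mapsto rX$ in the elementary factorisation of $\alpha(cX)$. For closure under addition, given $c_1, c_2 \in J$, write
$$\alpha((c_1 + c_2)X) \;=\; \bigl[\alpha((c_1+c_2)X)\alpha(c_2 X)^{-1}\bigr]\,\alpha(c_2 X).$$
Since $\alpha(c_2 X) \in \ETrans(IN[X])$ by hypothesis, it suffices to show the bracketed factor lies in $\ETrans(IN[X])$. This is handled by the two-variable automorphism $\beta(X,Y) := \alpha(X+Y)\alpha(Y)^{-1}$, which satisfies $\beta(0,Y) = Id$ and $\beta(X,Y)_\gm \in \ETrans(IN_\gm[X,Y])$ for every maximal ideal $\gm$ of $R$. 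Running the same local--global argument over the base ring $R[Y]$ gives $\beta(c_1 X, Y) \in \ETrans(IN[X,Y])$, and specialising $Y = c_2 X$ yields the desired conclusion.

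Combining these steps, $J$ is an ideal not contained in any maximal ideal of $R$, hence $J = R$ and $1 \in J$, completing the proof. The main obstacle will be making the two-variable reduction rigorous: one must check that the dilation principle and the commutator identities of Lemma \ref{commE(I)} can be run with the coefficient ring $R[Y]$ in place of $R$, while carefully tracking how powers of $Y$ and the dilation parameters interact in the exponent bounds. Once this bookkeeping is in place, closure of $J$ under addition follows and the argument concludes.
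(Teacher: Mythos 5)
Your strategy is the same as the paper's: Quillen patching driven by the dilation principle (Lemma \ref{linear,dil,true-rel}) applied to the two-variable automorphism $\gamma(X,Y)=\alpha(X+Y)\alpha(Y)^{-1}$, followed by a partition of unity $\sum c_i a_{\gm_i}^d=1$. The paper dispenses with the ``$J$ is an ideal'' formalism and instead writes the explicit telescoping identity $\alpha(X)=\prod_{i=1}^{k-1}\gamma(b_iX,T_i)\,\gamma(b_kX,0)$ with $b_i=c_ia_{\gm_i}^d$ and $T_i=b_{i+1}X+\cdots+b_kX$, each factor being a $Y$-specialisation of some $\gamma(b_iX,Y)\in\ETrans(IN[X,Y])$; this is exactly the content of ``$\sum b_i\in J$'' in your language.

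There is, however, a genuine gap in your closure-under-addition step. From $c_1\in J$ you only know $\alpha(c_1X)\in\ETrans(IN[X])$, which controls $\gamma(c_1X,Y)$ at $Y=0$ and says nothing about general $Y$; and ``running the same local--global argument over $R[Y]$'' to obtain $\gamma(c_1X,Y)\in\ETrans(IN[X,Y])$ is circular, since that argument would in turn need additivity of the analogous set over $R[Y]$, and so on. The repair is to build the auxiliary variable into the definition from the outset: set $J=\{c\in R:\ \gamma(cX,Y)\in\ETrans(IN[X,Y])\}$. This set is contained in yours (put $Y=0$), the local step still shows $J\not\subseteq\gm$ (apply the dilation lemma to $\gamma(X,Y)$ over the base ring $R[Y]$ with the same $a\in R$), stability under multiplication by $R$ is the substitution $X\mapsto rX$, and additivity now closes on itself via
$$\gamma\big((c_1+c_2)X,\,Y\big)\;=\;\gamma\big(c_1X,\;c_2X+Y\big)\;\gamma\big(c_2X,\,Y\big),$$
the first factor being the substitution $Y\mapsto c_2X+Y$ applied to $\gamma(c_1X,Y)$. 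Then $1\in J$ and setting $Y=0$ finishes; alternatively, skip the ideal formalism and use the paper's telescoping product directly. Your local step is fine as sketched, though note (as is also left implicit in the paper) that the dilation lemma only yields $\alpha^*(X)_a=\alpha(bX)_a$, so the Quillen splitting you invoke to promote this to an equality in $\Aut(N[X])$ does need to be carried out, e.g.\ by replacing $b$ with $ba^e$ for $e$ large.
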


\begin{proof} 
One can suitably choose an element $a_\gm$ from $R \setminus
\gm$ such that $\alpha(X)_{a_\gm}$ belongs to
$\ETrans(IN_{a_\gm}[X])$. Let us set $\gamma(X,Y) =
\alpha(X+Y) \alpha(Y)^{-1}$. Note that $\gamma(X,Y)_{a_\gm}$ belongs
to $\ETrans(IN_{a_\gm}[X,Y])$, and $\gamma(0,Y) =
Id$. From Lemma \ref{linear,dil,true-rel} it follows that $\gamma(b_\gm
X,Y) \in \ETrans(IN[X,Y])$, for $b_\gm \in (a_\gm^d)$, where $d
\gg 0$. Note that the ideal generated by $a_\gm^d$'s is the whole ring
$R$. Therefore, $c_1 a_{\gm_1}^d+ \ldots + c_k a_{\gm_k}^d = 1 $, for
some $c_i \in R$. Let $b_{m_i}= c_i a_{m_i}^d \in (a_{m_i}^d)$. It is
easy to see that $\alpha(X)=\prod_{i=1}^{k-1}\gamma(b_{m_i}X,T_i)
\gamma(b_{m_k}X,0)$, where $T_i = b_{m_{i+1}}X+ \cdots +
b_{m_k}X$. Each term in the right hand side of this expression belongs
to $\ETrans(IN[X])$ and hence $\alpha(X) \in \ETrans(IN[X])$.  
\end{proof}

Before we establish Tits's theorem for the linear transvection group, 
we state a lemma to show that every linear transvection is
homotopic to identity. Note that for any abelian group $G$, we
understand by $G[X]$ the abelian group of all polynomials in $X$ with
coefficients in $G$.

\begin{lem} \label{lin-trans-hom-to-identity}
Let $M$ be a finitely generated $R$-module and $\alpha \in
\Trans(M)$. Then there exists $\beta(X) \in \Trans(M[X])$ such that
$\beta(1)=\alpha$ and $\beta(0)=Id.$
\end{lem}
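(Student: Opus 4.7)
The plan is to use the obvious homotopy $\beta(X) := 1 + X\pi_q$, interpreted as an $R[X]$-linear endomorphism of $M[X]$. To make this precise, I would extend $\pi \in \Hom_R(M,R)$ to an $R[X]$-linear form $\widetilde\pi : M[X] \to R[X]$ by setting $\widetilde\pi(\sum_i p_i X^i) = \sum_i \pi(p_i) X^i$, and regard $q$ as an element of $M[X]$ via the canonical inclusion $M \hookrightarrow M[X]$. Then define $\beta(X)(p) = p + X\,\widetilde\pi(p)\,q$. Clearly $\beta(0) = \mathrm{Id}_{M[X]}$ and $\beta(1)$ restricts on $M$ to $\alpha$. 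Because $\widetilde\pi(q) = \pi(q) = 0$, the endomorphism $X\widetilde\pi_q$ squares to zero, so $\beta(X)$ is invertible with inverse $1 - X\widetilde\pi_q$, and hence lies in $\Aut(M[X])$.

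It remains to display $\beta(X)$ as a genuine element of $\Trans(M[X])$ by supplying transvection data for which the unimodularity requirement of Definition 4.1 survives the passage from $R$ to $R[X]$. Here I would split into the two cases afforded by the definition of a transvection on $M$.

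Case 1: $q \in \Um(M)$. Any $R$-linear $\phi : M \to R$ with $\phi(q) = 1$ extends $R[X]$-linearly to a map $M[X] \to R[X]$ still sending $q$ to $1$, so $q \in \Um(M[X])$. Presenting $\beta(X)$ with the transvection data $(X\widetilde\pi,\, q)$ — and noting $X\widetilde\pi(q) = 0$ — exhibits $\beta(X) = 1 + (X\widetilde\pi)_q$ as a transvection of $M[X]$.

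Case 2: $\pi \in \Um(M^*)$. Choose $p_0 \in M$ with $\pi(p_0) = 1$; then $\widetilde\pi(p_0) = 1 \in R[X]$, so $\widetilde\pi \in \Um((M[X])^*)$. Presenting $\beta(X)$ instead with the transvection data $(\widetilde\pi,\, Xq)$ — and noting $\widetilde\pi(Xq) = X\pi(q) = 0$ — exhibits $\beta(X) = 1 + \widetilde\pi_{Xq}$ as a transvection of $M[X]$. I do not anticipate any substantive obstacle; the only subtlety is that the single endomorphism $1 + X\widetilde\pi_q$ must be repackaged with \emph{different} transvection data in the two cases, placing the factor $X$ on the linear form in Case 1 and on the vector $q$ in Case 2, so that multiplication by $X$ does not destroy whichever half of the data was unimodular over $R$.
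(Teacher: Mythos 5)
Your construction --- placing the homotopy parameter $X$ on whichever half of the transvection data $(\pi,q)$ is \emph{not} required to be unimodular, so that $\beta(X)=1+(X\widetilde\pi)_q$ when $q\in\Um(M)$ and $\beta(X)=1+\widetilde\pi_{Xq}$ when $\pi\in\Um(M^*)$ --- is exactly the argument of Lemma 4.9 of \cite{cr2}, to which the paper's proof simply defers, and both cases are handled correctly. The only point worth adding is that when $\Trans(M)$ is read as the subgroup generated by transvections, you should write $\alpha=\tau_1\cdots\tau_k$ and take $\beta(X)=\beta_1(X)\cdots\beta_k(X)$ with $\beta_i(X)$ the homotopy for $\tau_i$; since evaluation at $X=0$ and $X=1$ is a group homomorphism, this changes nothing.
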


\begin{proof} 
See proof of Lemma 4.9 in \cite{cr2}.
\end{proof}

\begin{thm} \label{Tits-linTrans} Let $I$ be an ideal of $R$, $M$ be
  a finitely generated projective $R$-module of rank at least 2, and
  $N=(R \oplus M)$. Then $\ETrans(N,I^4N) \subseteq \ETrans(IN)$.
\end{thm}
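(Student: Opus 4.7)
The goal is to reduce the transvection problem to the linear group problem locally, then patch globally. The backbone of the argument is to fit the given $\alpha \in \ETrans(N, I^4 N)$ into a polynomial family $\alpha(X) \in \ETrans(N[X], I^4 N[X])$ with $\alpha(0) = \mathrm{Id}$ and $\alpha(1) = \alpha$, and then apply the Local--Global principle (Lemma \ref{linear,LG,true-rel}).

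First, I would construct $\alpha(X)$. By definition of the normal closure, write $\alpha = \prod_i g_i \epsilon_i g_i^{-1}$ with $g_i \in \ETrans(N)$ and $\epsilon_i$ an elementary transvection of the form $E(x_i)$ or $E^*(\tau_i)$ with $x_i \in I^4 M$, $\tau_i \in I^4 M^*$. Using the homotopy idea of Lemma \ref{lin-trans-hom-to-identity}, replace each $\epsilon_i$ by $\epsilon_i(X) = E(Xx_i)$ or $E^*(X\tau_i)$, which lies in $\ETrans(I^4 N[X])$ and equals $\mathrm{Id}$ at $X=0$, $\epsilon_i$ at $X=1$. Set $\alpha(X) = \prod_i g_i \epsilon_i(X) g_i^{-1}$; then $\alpha(X) \in \ETrans(N[X], I^4 N[X])$, $\alpha(0) = \mathrm{Id}$, and $\alpha(1) = \alpha$.

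Second, I would verify the local hypothesis of Lemma \ref{linear,LG,true-rel}. Fix a maximal ideal $\gm$ of $R$. Since $M$ is finitely generated projective of rank $\ge 2$, the module $M_\gm$ is free of some rank $n \ge 2$, so $N_\gm = R_\gm \oplus M_\gm$ is free of rank $n+1 \ge 3$. By Lemma \ref{linear,free,rel} applied over $R_\gm[X]$,
\[
\alpha(X)_\gm \in \ETrans(N_\gm[X], I_\gm^4 N_\gm[X]) = \E_{n+1}(R_\gm[X], I_\gm^4[X]).
\]
Now apply Theorem \ref{tits-lin} with the ideal $I_\gm^2$ to get $\E_{n+1}(R_\gm[X], I_\gm^4[X]) \subseteq \E_{n+1}(I_\gm^2[X])$, and then Remark \ref{inclusion-linear} gives $\E_{n+1}(I_\gm^2[X]) \subseteq \E_{n+1}^1(I_\gm[X])$. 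Finally, Lemma \ref{linear,free,true-rel} identifies the last group with $\ETrans(I N_\gm[X])$, so $\alpha(X)_\gm \in \ETrans(I N_\gm[X])$.

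Third, invoke Lemma \ref{linear,LG,true-rel} to conclude $\alpha(X) \in \ETrans(I N[X])$, and specialise at $X = 1$ to obtain $\alpha \in \ETrans(IN)$, completing the argument. The main technical issue to watch is that the homotopy construction yields an element honestly in the relative elementary transvection group over $R[X]$ (so that its localisations land in the correct free-case groups), and that the exponent $4$ is precisely what is needed so that $I^4 = (I^2)^2$ feeds the chain Theorem \ref{tits-lin} $\to$ Remark \ref{inclusion-linear} at the free local level; any smaller exponent would not permit this two-step reduction with the tools currently available.
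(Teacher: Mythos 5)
Your proposal is correct and follows essentially the same route as the paper's proof: homotope $\alpha$ to the identity, localise at each maximal ideal, identify the localisation with $\E_{n+1}(R_\gm[X],(I_\gm[X])^4)$ via Lemma \ref{linear,free,rel}, apply Theorem \ref{tits-lin} together with Remark \ref{inclusion-linear} to land in $\E_{n+1}^1(I_\gm[X]) = \ETrans(IN_\gm[X])$, and finish with Lemma \ref{linear,LG,true-rel} and evaluation at $X=1$. The only cosmetic difference is that you use the linear homotopy $E(Xx_i)$ where the paper uses $E(x_iX^4)$; both lie in $\ETrans(I^4N[X])$, so this is immaterial.
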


\begin{proof} Let us consider an element $\alpha \in \ETrans(N,I^4N)$. Note 
that $\alpha$ is product of elements of the form $g_1 E(z) g_1^{-1}$
and $g_2 E^*(\tau) g_2^{-1}$, where $g_1, g_2 \in \ETrans(N)$,
$z \in I^4M$, and $\tau \in I^4M^*$. Let us set $\alpha(X)$ to be 
product of elements of the form $g_1(X) E(z X^4) g_1(X)^{-1}$
and $g_2(X) E^*(\tau X^4) g_2(X)^{-1}$, where $g_1(X)$ and  $g_2(X) \in \ETrans(N[X])$,
$z X^4 \in I^4M[X^4]$, and $\tau X^4 \in I^4M^*[X^4]$. Localizing
at a maximal ideal $\gm$ of $R$, we get $\alpha(X)_\gm$ is product
of elements of the form $g_1(X)_\gm ~ E(z X^4)_\gm ~ g_1(X)_\gm ^{-1}$
and $g_2(X)_\gm ~ E^*(\tau X^4)_\gm ~ g_2(X)_\gm ^{-1}$. These 
elements belong to $\ETrans(N_\gm[X],I^4N_\gm[X^4])$ which is equal to the group $\E_{n+1}(R_\gm[X], (I_\gm[X])^4)$ by Lemma \ref{linear,free,rel}. Using Theorem \ref{tits-lin} we get  
that $\alpha(X)_\gm \in  \E_{n+1}((I_\gm[X])^2) \subseteq \E_{n+1}^1(I_\gm[X])$
(see Remark \ref{inclusion-linear}). Note that $\E_{n+1}^1(I_\gm[X])= \ETrans(IN_\gm[X])$ by Lemma \ref{linear,free,true-rel}.  Using Lemma \ref{linear,LG,true-rel}, we get that $\alpha(X) \in
\ETrans(IN[X])$. Substituting $X=1$ we get that $\alpha \in \ETrans(IN)$.
\end{proof}

\begin{rmk} \label{rmk-linTrans}
{\rm We recall Tits's result for the elementary linear group which says that $\E_n(R, I^2) 
\subseteq \E_n(I)$ (see Theorem \ref{tits-lin}). However, our theorem as above involves fourth power 
of an ideal. In view of Tits's result it will be interesting to know if the 
above theorem holds for square of an ideal. }
\end{rmk}

\section{\large{The case of symplectic transvections groups}}

In this section we prove an analogue of Tits's result in the case of symplectic transvections groups.
As in the previous section we begin by recalling some associated basic notions in this context. 

\begin{de} {\bf Alternating Matrix:} {\rm  A matrix in ${\rm M}_n (R)$ is said
  to be {\it alternating} if it has the form $\nu - \nu^t$, where $\nu
  \in {\rm M}_n(R)$. It follows immediately that the diagonal elements of an alternating matrix are
  zeros.}
\end{de}

\begin{lem} \label{local,rel} Let $(R,\gm)$ be a local ring and $I$ be an
  ideal of $R$. Let $\varphi$ be an alternating matrix of Pfaffian 1
  over $R$, and $\varphi \equiv \psi_n ~({\rm mod}~I)$. Then $\varphi$
  is of the form $$(1 \perp \varepsilon)^t \psi_n (1 \perp
  \varepsilon),$$ for some $\varepsilon \in {\rm E}_{2n-1}(R,I)$.
\end{lem}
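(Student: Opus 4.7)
The plan is to induct on $n$, using the unit $\varphi_{12}$---which is congruent to $1$ modulo $I$ and hence a unit in the local ring $R$---as a pivot throughout. The base case $n=1$ is immediate since an alternating $2 \times 2$ matrix of Pfaffian $1$ equals $\psi_1$ and $\E_{1}(R,I)$ is trivial.

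For the inductive step, I would first clear the off-diagonal entries of the first row and column of $\varphi$: for each $j \ge 3$ the scalar $c_{j} := -\varphi_{1j}/\varphi_{12}$ lies in $I$, so conjugation by $1 \perp E_{1,\,j-1}(c_{j}) \in 1 \perp \E_{2n-1}(R, I)$ annihilates $\varphi_{1j}$ and $\varphi_{j1}$ without disturbing $\varphi_{12}$. After these operations, $\varphi$ takes the form $\bigl(\begin{smallmatrix} 0 & a & 0 \\ -a & 0 & u^{t} \\ 0 & -u & A \end{smallmatrix}\bigr)$ with $a = \varphi_{12}$, $u \in I^{2n-2}$ and $A$ an alternating $(2n-2) \times (2n-2)$ matrix satisfying $A \equiv \psi_{n-1} \pmod I$. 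To eliminate $u$ in a single move, for each $k \ge 3$ let $l(k)$ denote the $\psi_{n}$-partner of $k$, and solve the linear system $\sum_{k} x_{k}\, A_{l(k)-2,\,m-2} = -u_{m-2}$ for $m = 3, \ldots, 2n$. Its coefficient matrix is congruent modulo $I$ to a diagonal matrix of units (the partner entries $\pm 1$ on the diagonal, all off-diagonal entries in $I$), hence invertible over the local ring $R$, and the unique solution $(x_{k})$ lies in $I^{2n-2}$. The resulting matrix $\eta := I + \sum_{k} x_{k}\, e_{l(k),\,2}$ belongs to $\E_{2n-1}(R, I)$ and brings $\varphi$ into block-diagonal form $\bigl(\begin{smallmatrix} 0 & a \\ -a & 0 \end{smallmatrix}\bigr) \perp A'$ with $A'$ alternating of Pfaffian $1/a$.

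The last step is a Whitehead-type rebalance: the diagonal matrix $\mathrm{diag}(1,\, 1/a,\, a,\, 1, \ldots, 1)$ lies in $1 \perp \E_{2n-1}(R, I)$ because $a \equiv 1 \pmod I$ and, over the local ring $R$, $\SL_{m}(R, I) = \E_{m}(R, I)$ for $m \ge 3$. Conjugation by this matrix sends the $2 \times 2$ block to $\psi_{1}$ and rescales $A'$ into an alternating matrix $A''$ of Pfaffian $1$ still satisfying $A'' \equiv \psi_{n-1} \pmod I$. Applying the induction hypothesis to $A''$ yields $\varepsilon'' \in \E_{2n-3}(R, I)$, embedded into $\E_{2n-1}(R, I)$ in the obvious way; composing all the transformations above produces the desired $\varepsilon \in \E_{2n-1}(R, I)$. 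The main obstacles to verify carefully will be (i) the invertibility of the linear system that eliminates $u$ in a single step and (ii) the membership of the Whitehead diagonal in $\E_{2n-1}(R, I)$; both rely decisively on the local ring hypothesis.
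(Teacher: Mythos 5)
Your argument is sound for a proper ideal $I$ (equivalently, $I \subseteq \gm$), and in that regime it is essentially the paper's proof made explicit: the paper also inducts on $n$, but instead of pivoting on the unit $\varphi_{12}$ it invokes the fact that over a local ring any $v \in \Um_{2n-1}(R,I)$ satisfies $v = e_1\beta$ for some $\beta \in \E_{2n-1}(R,I)$, applied to the first row of $\varphi$ with its leading zero deleted. That normalizes the first row to $(0,1,0,\ldots,0)$ exactly, after which the second row is cleared by adding $I$-multiples of the first column to the later columns; this bypasses both your linear system and your Whitehead rebalance, since the pivot is already $1$ rather than merely a unit $a$.

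The genuine gap is the case $I = R$, which the statement permits and which is precisely how the lemma is used later: in the proof of Theorem \ref{Tits-sympTrans} it is applied with no congruence hypothesis, to produce $\varepsilon(\gm)\in\E_{2n-1}(R_\gm)$. There your opening claim that $\varphi_{12}$ is a unit because $\varphi_{12}\equiv 1 \pmod I$ is vacuous, and $\varphi_{12}$ may well lie in $\gm$. What saves the day is that the first row of $\varphi$ is unimodular (Pfaffian $1$ forces $\det\varphi$ to be a unit), so over the local ring some $\varphi_{1j}$ is a unit and an elementary column operation (adding a suitable multiple of column $j$ to column $2$, which is legitimate since $I=R$) restores a unit in position $(1,2)$; this is exactly the content of the fact $\Um_{2n-1}(R,I)=e_1\,\E_{2n-1}(R,I)$ that the paper cites. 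Two smaller points in the same vein: the invertibility of your linear system for eliminating $u$ is better justified by the Pfaffian expansion along the cleared first row, which gives $\mathrm{Pf}(\varphi)=\varphi_{12}\,\mathrm{Pf}(A)$, so $\mathrm{Pf}(A)$ is a unit and $A$ is invertible outright (your mod-$I$ argument for this coefficient matrix again says nothing when $I=R$); and the membership of $\mathrm{diag}(1/a,\,a,\,1,\ldots,1)$ in $\E_{2n-1}(R,I)$ for a unit $a\in 1+I$ does hold over a local ring, but deserves an explicit relative Whitehead decomposition or a citation rather than a bare appeal to $\SL_{m}(R,I)=\E_{m}(R,I)$.
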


\begin{proof}
Follows using induction on $n$ and the fact that for any $v \in
\Um_n(R,I)$, $v=e_1 \beta$, for some $\beta \in \E_n(R,I)$ over the local ring $(R, \gm)$. For details one 
can see proof of Lemma 5.2 in \cite{cr2}.
\end{proof}

\begin{de} 
{\rm 
A {\it symplectic $R$-module} is a pair $(P,\langle , \rangle)$,
where $P$ is a finitely generated projective $R$-module of even rank
and $\langle , \rangle: P \times P \lra R$ is a non-degenerate (i.e, $P
\cong P^*$ by $x \lra \langle x , -\rangle$) alternating bilinear form.
}
\end{de}

\begin{de}
{\rm
Let $(P_1,\langle , \rangle_1)$ and $(P_2,\langle , \rangle_2)$ be
two  symplectic $R$-modules. Their {\it orthogonal sum} is the pair
$(P,\langle , \rangle)$, where $P=P_1 \oplus P_2$ and the inner product
is defined by $\langle (p_1,p_2),(q_1,q_2)\rangle = \langle
p_1,q_1\rangle_1 + \langle p_2,q_2\rangle_2$.

There is a non-degenerate alternating bilinear form $\langle , \rangle$ on the
$R$-module $\mathbb{H}(R)= R \oplus R^*$, namely $\langle (a_1,f_1),
(a_2,f_2) \rangle = f_2(a_1) - f_1(a_2)$.  }
\end{de}

\begin{de}
{\rm An {\it isometry} of a symplectic module $(P,\langle , \rangle)$ is
  an automorphism of $P$ which fixes the bilinear form. The group of
  isometries of $(P, \langle , \rangle)$ is denoted by
  $\Sp(P)$.  }
\end{de}

\begin{de} 
{\rm 
In \cite{B2} Bass has defined a {\it symplectic transvection}
of a symplectic module $(P,\langle , \rangle)$ to be an automorphism of the form
\begin{eqnarray*}
\sigma(p) &=& p + \langle u , p \rangle v + \langle v , p \rangle u + \alpha
\langle u , p \rangle u,
\end{eqnarray*}
where $\alpha \in R$, $u,v \in P$ are fixed elements with $\langle
u,v \rangle=0$, and either $u$ or $v$ is unimodular. It is easy to
check that $\langle \sigma(p), \sigma(q) \rangle = \langle p, q
\rangle$ and $\sigma$ has an inverse $\tau(p) = p - \langle u, p
\rangle v - \langle v, p \rangle u - \alpha \langle u, p \rangle u$.

The subgroup of $\Sp(P)$ generated by the symplectic
transvections is denoted by $\Trans_{\Sp}(P, \langle , \rangle)$ (see
\cite{Sw}, Page 35).  }
\end{de}

{\it Now onwards $Q$ will denote $(R^2 \oplus P)$ with the induced form
  on $(\mathbb{H}(R)~\oplus~P)$, and $Q[X]$ will denote $(R[X]^2 \oplus
  P[X])$ with the induced form on $(\mathbb{H}(R[X])~\oplus~P[X])$.}

\begin{de} {\rm Symplectic transvections of $Q=(R^2 \oplus P)$ of 
the form
    \begin{eqnarray*}
      (a, b, p) & \mapsto & (a, b - \langle p, q \rangle - \alpha a, p-aq),
     \end{eqnarray*} 
or of the form 
    \begin{eqnarray*}
      (a, b, p) & \mapsto & (a + \langle p, q \rangle + \beta b, b,  p-bq), 
    \end{eqnarray*} 
where $\alpha, \beta \in R$ and $q \in P$, 
are called {\it elementary symplectic transvections}. Let us denote the first 
isometry by $\rho(q, \alpha)$ and the second one by $\mu(q, \beta)$. It can 
be verified that the elementary symplectic transvections are symplectic 
transvections on $Q$. Indeed, consider $(u, v) =((0,1,0),(0,0,q))$  to get 
$\rho(q, \alpha)$ and consider $(u, v)= ((-1,0,0),(0,0,q))$ to get $\mu(q, \beta)$.

The subgroup of $\Trans_{\Sp}(Q, \langle , \rangle)$ generated by the
elementary symplectic transvections is denoted by $\ETrans_{\Sp}(Q,
\langle , \rangle)$.}
\end{de}

\begin{de} {\rm Let $I$ be an ideal of $R$. Elementary symplectic
transactions of $Q$ of the form $\rho(q, \alpha), \mu(q, \beta)$, where
$q \in IP$ and $\alpha, \beta \in I$ are called {\it relative elementary 
symplectic transvections} with respect to an ideal $I$.

The subgroup of $\ETrans_{\Sp}(Q, \langle , \rangle)$ generated by
relative elementary symplectic transvections is denoted by
$\ETrans_{\Sp}(IQ,\langle , \rangle )$. The normal closure of
$\ETrans_{\Sp}(IQ,\langle , \rangle )$ in
$\ETrans_{\Sp}(Q, \langle , \rangle)$ is denoted by
$\ETrans_{\Sp}(Q,IQ, \langle , \rangle)$ }.
\end{de}

\begin{rmk} \label{free} {\rm Let $P=\oplus_{i=1}^{2n} Re_i$ be a free $R$-module. 
The non-degenerate alternating bilinear form $\langle,\rangle$ on $P$
corresponds to an alternating matrix $\varphi$ with Praffian 1 with
respect to the basis $\{e_1, e_2, \ldots, e_{2n} \}$ of $P$ and we write
$\langle p, q \rangle = p \varphi q^t$.

  In this case the symplectic transvection $\sigma(p) = p + \langle u,
  p \rangle v + \langle v, p \rangle u + \alpha \langle u, p \rangle
  u$ corresponds to the matrix $(I_{2n} + v u^t \varphi  + u v^t \varphi)
  (I_{2n} + \alpha u u^t \varphi )$ and the group generated by them
  is denoted by $\Trans_{\Sp}(P, \langle , \rangle_{\varphi})$.

Also in this case $\ETrans_{\Sp}(Q, \langle , \rangle_{\psi_1 \perp \varphi})$ will
be generated by the matrices of the form $\rho_{\varphi}(q, \alpha) =
\Big( \begin{smallmatrix} 1 & 0 & 0 \\ -\alpha  & 1 & q^t \varphi \\ -q &
  0 & I_{2n} \end{smallmatrix} \Big)$, and $\mu_{\varphi}(q, \beta) =
\Big( \begin{smallmatrix} 1 & \beta & -q^t \varphi \\ 0 & 1 & 0 \\ 0 &
-q & I_{2n} \end{smallmatrix} \Big)$.

Note that for $q=(q_1, \ldots, q_{2n}) \in R^{2n}$, and for the
standard alternating matrix $\psi_n$, we have
\begin{eqnarray} \label{relatn5}
\rho_{\psi_n}(q, \alpha) &=& se_{21}(-\alpha + q_1q_2 + \cdots + q_{2n-1}q_{2n}) \prod_{i=3}^{2n+2}
se_{i1}(-q_{i-2}), \\ 
\label{relatn6}
\mu_{\psi_n}(q, \beta) &=& se_{12}(\beta + q_1q_2 + \cdots + q_{2n-1}q_{2n})
\prod_{i=3}^{2n+2} se_{1i}((-1)^{i+1} q_{\sigma(i-2)}).  
\end{eqnarray}

We shall implicitly use these assumptions and notations in this section.}
\end{rmk}

\begin{lem} \label{symp,free,true-rel} 
Let $R$ be a commutative ring and $I$ be an ideal of $R$. Let $(P, \langle, \rangle_{\varphi})$
be a symplectic $R$-module with $P$ be free of rank $2n$, $n \ge 1$ and $Q=(R^2 \oplus P)$ 
with the induced form on $(\mathbb{H}(R)~\oplus~P)$. If $\varphi = \psi_n$ is
the standard alternating matrix, then $\ETrans_{\Sp}(IQ, \langle ,
\rangle_{\psi_{n+1}}) = \ESp_{2n+2}^1(I)$.
\end{lem}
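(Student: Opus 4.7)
\medskip

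\noindent\textbf{Proof proposal.} The plan is to unravel both sides in coordinates using the explicit matrix formulas for $\rho_{\psi_n}(q,\alpha)$ and $\mu_{\psi_n}(q,\beta)$ supplied in Remark \ref{free}, namely
\begin{eqnarray*}
\rho_{\psi_n}(q, \alpha) &=& se_{21}(-\alpha + q_1q_2 + \cdots + q_{2n-1}q_{2n}) \prod_{i=3}^{2n+2} se_{i1}(-q_{i-2}), \\
\mu_{\psi_n}(q, \beta) &=& se_{12}(\beta + q_1q_2 + \cdots + q_{2n-1}q_{2n}) \prod_{i=3}^{2n+2} se_{1i}((-1)^{i+1} q_{\sigma(i-2)}).
\end{eqnarray*}
These express every elementary symplectic transvection on $Q$ as a product of generators having $1$ as one of their indices, which is exactly the defining shape of the generators of $\ESp_{2n+2}^1(I)$.

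For the inclusion $\ETrans_{\Sp}(IQ,\langle , \rangle_{\psi_{n+1}}) \subseteq \ESp_{2n+2}^1(I)$, I take a relative generator $\rho_{\psi_n}(q,\alpha)$ or $\mu_{\psi_n}(q,\beta)$ with $q \in I P$ and $\alpha,\beta \in I$. Since each $q_k$ lies in $I$ and the scalar in the first factor ($-\alpha + \sum q_{2i-1}q_{2i}$ or $\beta + \sum q_{2i-1}q_{2i}$) lies in $I$, every single factor $se_{21}(\cdot)$, $se_{12}(\cdot)$, $se_{i1}(\cdot)$, $se_{1i}(\cdot)$ appearing in the above products is a generator of $\ESp_{2n+2}^1(I)$. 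Hence the entire product belongs to $\ESp_{2n+2}^1(I)$.

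For the reverse inclusion, I realize each defining generator $se_{1i}(x)$ and $se_{j1}(y)$ ($x,y \in I$, $2 \le i,j \le 2n+2$) as a suitable relative elementary symplectic transvection. The boundary generators $se_{21}(y)$ and $se_{12}(x)$ come from $\rho_{\psi_n}(0,-y)$ and $\mu_{\psi_n}(0,x)$ respectively. For $j \in \{3,\ldots,2n+2\}$ I set $q := -y\,e_{j-2} \in IP$ and $\alpha := 0$; then the sum $\sum q_{2i-1}q_{2i}$ vanishes (only one coordinate of $q$ is nonzero) and the product on the right of $\rho_{\psi_n}(q,0)$ collapses to the single factor $se_{j1}(y)$, the other factors all being trivial. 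Analogously, for $i \in \{3,\ldots,2n+2\}$ I set $q := (-1)^{i+1}x\,e_{\sigma(i-2)}\in IP$ and $\beta := 0$; then $\mu_{\psi_n}(q,0)$ reduces to $se_{1i}(x)$ alone. This shows every generator of $\ESp_{2n+2}^1(I)$ lies in $\ETrans_{\Sp}(IQ,\langle , \rangle_{\psi_{n+1}})$, and completes the proof.

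The calculation is essentially formal; the only thing that requires genuine care is the bookkeeping of signs and the index convention $\sigma(2k)=2k-1$, $\sigma(2k-1)=2k$ when inverting the formula for $\mu_{\psi_n}$ to hit the target generator $se_{1i}(x)$ with the correct sign. Once the single-coordinate choice of $q$ kills the quadratic term $\sum q_{2i-1}q_{2i}$ and all but one of the factors in the product, both inclusions fall out from the formulas of Remark \ref{free} with no further input.
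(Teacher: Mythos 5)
Your proof is correct and follows essentially the same route as the paper's: both directions are read off from the explicit factorizations of $\rho_{\psi_n}(q,\alpha)$ and $\mu_{\psi_n}(q,\beta)$ in Remark \ref{free}. The paper's own proof is just a two-line appeal to that remark; your version supplies the explicit single-coordinate choices of $q$ (e.g.\ $q=-y\,e_{j-2}$ and $q=(-1)^{i+1}x\,e_{\sigma(i-2)}$) realizing each generator of $\ESp_{2n+2}^1(I)$, and these check out, including the sign bookkeeping.
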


\begin{proof}
In this case $\ETrans_{\Sp}(IQ, \langle ,\rangle_{\psi_{n+1}})$ is 
generated by the elements of the form $\rho_{\psi_n}(q, \alpha)$ and 
$\mu_{\psi_n}(q, \beta)$, where $q \in I^{2n} (\subseteq R^{2n})$, and $\alpha,
\beta \in I$. By Remark \ref{free} it follows that $\ETrans_{\Sp}(IQ, \langle ,\rangle_{\psi_{n+1}}) 
\subseteq \ESp_{2n+2}^1(I)$. When $Q$ is free, generators of $\ESp_{2n+2}^1(I)$ belong 
to $\ETrans_{\Sp}(IQ, \langle , \rangle_{\psi_{n+1}})$, 
and hence we get that $\ESp_{2n+2}^1(I) \subseteq \ETrans_{\Sp}(IQ, \langle , 
\rangle_{\psi_{n+1}})$.
\end{proof}

\begin{lem} \label{symp,free,rel} Let $R$ be a commutative ring with
  $R=2R$, and let $I$ be an ideal of $R$. Let  $(P, \langle, \rangle_{\varphi})$ be a symplectic $R$-module
  with $P$ be free of rank $2n$, $n \ge 1$ and $Q=(R^2 \oplus P)$ 
with the induced form on $(\mathbb{H}(R)~\oplus~P)$. If $\varphi = \psi_n$ is the standard
  alternating matrix, then $\ETrans_{\Sp}(Q,IQ,
  \langle , \rangle_{\psi_{n+1}}) = \ESp_{2n+2}(R,I)$.
\end{lem}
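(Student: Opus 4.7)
The plan is to identify both sides of the claimed equality with the same normal closure inside $\ESp_{2n+2}(R)$. First, applying Lemma \ref{symp,free,true-rel} with $I = R$ yields $\ETrans_{\Sp}(Q, \langle , \rangle_{\psi_{n+1}}) = \ESp_{2n+2}^1(R)$, and since $R = 2R$, Remark \ref{inclusion-symplectic} gives $\ESp_{2n+2}(R) = \ESp_{2n+2}(R^2) \subseteq \ESp_{2n+2}^1(R)$; the reverse inclusion being trivial, I conclude $\ETrans_{\Sp}(Q, \langle , \rangle_{\psi_{n+1}}) = \ESp_{2n+2}(R)$. Combined with Lemma \ref{symp,free,true-rel} applied directly to the ideal $I$, this identifies $\ETrans_{\Sp}(Q, IQ, \langle , \rangle_{\psi_{n+1}})$ with the normal closure of $\ESp_{2n+2}^1(I)$ in $\ESp_{2n+2}(R)$.

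It therefore suffices to show that this normal closure coincides with $\ESp_{2n+2}(R, I)$, the normal closure of $\ESp_{2n+2}(I)$ in $\ESp_{2n+2}(R)$. The inclusion $\subseteq$ is immediate from $\ESp_{2n+2}^1(I) \subseteq \ESp_{2n+2}(I)$. For the reverse inclusion, one must verify that every generator $se_{ij}(x)$ of $\ESp_{2n+2}(I)$ (with $x \in I$) lies in the normal closure of $\ESp_{2n+2}^1(I)$ in $\ESp_{2n+2}(R)$. If $i = 1$ or $j = 1$ this is immediate. For $i, j \ge 2$ with $i \ne \sigma(j)$, the commutator identity $se_{ij}(ab) = [se_{ik}(a), se_{kj}(b)]$ of Remark \ref{inclusion-symplectic} allows one to write $se_{ij}(x)$ as a commutator whose first factor can be chosen in $\ESp_{2n+2}^1(I)$: one takes $k = 1$ when $i, j \ge 3$, and otherwise picks $k \ge 3$ avoiding $\sigma(i), \sigma(j)$, re-expressing $se_{ik}$ as a conjugate of an $\ESp_{2n+2}^1(I)$-generator by a Weyl-type element of $\ESp_{2n+2}(R)$. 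For the short-root case $i = \sigma(j)$ with $i \notin \{1, 2\}$, I would use $R = 2R$ to write $x = 2x'$ and apply the identity $se_{i\sigma(i)}(2ab) = [se_{ik}(a), se_{k\sigma(i)}(b)]$ with $k = 1$, again exhibiting $se_{ij}(x)$ as a commutator of an $\ESp_{2n+2}^1(I)$-element with an $\ESp_{2n+2}(R)$-element.

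The main obstacle will be handling the cases $i = 2$ or $j = 2$ uniformly, where the natural choice $k = 1$ is obstructed by the hypothesis $k \ne \sigma(i), \sigma(j)$. Here one needs either an explicit Weyl-type permutation of the symplectic basis within $\ESp_{2n+2}(R)$ implementing a swap between row $1$ and row $i$ up to sign, or a short chain of commutator substitutions that reduces to the case $i, j \ge 3$. This is the symplectic analogue of the manipulations used for the linear case (see proof of Lemma 4.5 in \cite{cr2}), and the stable-range condition coming from $Q$ having rank $2n+2 \ge 4$ with $n \ge 1$ is what makes sufficient intermediate indices $k$ available.
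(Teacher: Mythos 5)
Your overall strategy is the natural one (note that the paper itself offers no argument here, deferring entirely to Lemma~5.14 of \cite{cr2}): use Lemma~\ref{symp,free,true-rel} with $I=R$ together with Remark~\ref{inclusion-symplectic} to identify $\ETrans_{\Sp}(Q,\langle,\rangle_{\psi_{n+1}})$ with $\ESp_{2n+2}(R)$, identify $\ETrans_{\Sp}(IQ,\langle,\rangle_{\psi_{n+1}})$ with $\ESp_{2n+2}^1(I)$, and thereby reduce the lemma to showing that the normal closures of $\ESp_{2n+2}^1(I)$ and of $\ESp_{2n+2}(I)$ in $\ESp_{2n+2}(R)$ coincide. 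That reduction, the inclusion $\subseteq$, and your treatment of the generators with $i,j\ge 3$ are all correct: $se_{ij}(x)=[se_{i1}(x),se_{1j}(1)]$ for $j\ne\sigma(i)$ and $se_{i\sigma(i)}(x)=[se_{i1}(x/2),se_{1\sigma(i)}(1)]$ (using $R=2R$) exhibit these as commutators of an element of $\ESp_{2n+2}^1(I)$ with an element of $\ESp_{2n+2}(R)$, hence as elements of the normal closure.

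The gap is exactly where you flag it, and your proposed repair fails in a case the lemma covers. For $se_{2j}(x)$ with $j\ge 3$ you want an intermediate index $k\ge 3$ with $k\ne j,\sigma(j)$; when $n=1$ (rank $4$) no such $k$ exists, so your closing remark that the rank hypothesis ``makes sufficient intermediate indices available'' is false precisely in the smallest case, and the alternative via Weyl-type conjugation is left unexecuted. The clean fix is that these cases are vacuous: unwinding Definition~\ref{2.4}, since $se_{ij}(z)=1+ze_{ij}-(-1)^{i+j}ze_{\sigma(j)\sigma(i)}$ and $i+\sigma(i)$ is always odd, one checks directly that $se_{2j}(z)=se_{\sigma(j)1}(-(-1)^{j}z)$ and $se_{i2}(z)=se_{1\sigma(i)}(-(-1)^{i}z)$. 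Thus every generator of $\ESp_{2n+2}(I)$ having an index in $\{1,2\}$ already lies in $\ESp_{2n+2}^1(I)$, and only the case $i,j\ge 3$, which you handle correctly, requires a commutator. (Alternatively, identity (\ref{commprop2}) with $i=2$ writes $se_{2j}(x)$ as $[se_{21}(x),se_{1j}(1)]$ times a short-root element $se_{\sigma(j)j}(\pm x)$ with both indices $\ge 3$, again reducing to settled cases.) With either patch your argument is complete.
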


\begin{proof}
See proof of Lemma 5.14 in \cite{cr2}.
\end{proof}

\begin{lem} \label{phi=phi*,true-relative}
Let $P$ be a free $R$-module of rank $2n$. Let $(P,\langle ,
\rangle_{\varphi})$ and $(P,\langle , \rangle_{\varphi^*})$ be two
symplectic $R$-modules with $\varphi= (1 \perp \varepsilon)^t ~
\varphi^* ~ (1 \perp \varepsilon)$, for some $\varepsilon \in
\E_{2n-1}(R)$, and $Q=(R^2 \oplus P)$ with the induced form
  on $(\mathbb{H}(R)~\oplus~P)$. Then
\begin{eqnarray*}  
\ETrans_{\Sp}(IQ, \langle , \rangle_{\psi_1 \perp \varphi}) &=& (I_3 \perp
\varepsilon)^{-1}~ \ETrans_{\Sp}(IQ,\langle , \rangle_{\psi_1 \perp \varphi^*}) ~ (I_3
\perp \varepsilon), \\
\ETrans_{\Sp}(Q, IQ, \langle , \rangle_{\psi_1 \perp \varphi}) &=& (I_3 \perp
\varepsilon)^{-1}~ \ETrans_{\Sp}(Q, IQ,\langle , \rangle_{\psi_1 \perp \varphi^*}) ~ (I_3
\perp \varepsilon).
\end{eqnarray*}
\end{lem}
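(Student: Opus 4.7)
The plan is to compute explicitly how conjugation by $T := I_3 \perp \varepsilon$ acts on the generators $\rho_\varphi(q,\alpha)$ and $\mu_\varphi(q,\beta)$ of $\ETrans_{\Sp}(IQ, \langle,\rangle_{\psi_1 \perp \varphi})$ exhibited in Remark \ref{free}. The key algebraic identity I will use is
\begin{equation*}
q^t \varphi (1 \perp \varepsilon)^{-1} = \bigl((1 \perp \varepsilon) q\bigr)^t \varphi^*,
\end{equation*}
which is obtained by substituting $\varphi = (1 \perp \varepsilon)^t \varphi^* (1 \perp \varepsilon)$ and transposing.

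Next I would regard $T$ as the block-diagonal matrix with blocks $1, 1, (1 \perp \varepsilon)$ of sizes $1, 1, 2n$ respectively, matching the block decomposition used for $\rho_\varphi$ and $\mu_\varphi$. A routine block multiplication then yields
\begin{align*}
T \, \rho_\varphi(q, \alpha) \, T^{-1} &= \rho_{\varphi^*}\bigl((1 \perp \varepsilon)q,\, \alpha\bigr), \\
T \, \mu_\varphi(q, \beta) \, T^{-1} &= \mu_{\varphi^*}\bigl((1 \perp \varepsilon)q,\, \beta\bigr).
\end{align*}
Because $\varepsilon \in \E_{2n-1}(R)$ has all entries in $R$ and is invertible with inverse in $\E_{2n-1}(R)$, the map $q \mapsto (1 \perp \varepsilon) q$ restricts to a bijection of $IP$ onto itself. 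Hence conjugation by $T$ sends each generator of $\ETrans_{\Sp}(IQ, \langle,\rangle_{\psi_1 \perp \varphi})$ to a generator of $\ETrans_{\Sp}(IQ, \langle,\rangle_{\psi_1 \perp \varphi^*})$, which gives one inclusion in the first equality. Applying exactly the same computation with $\varepsilon$ replaced by $\varepsilon^{-1}$ yields the reverse inclusion, and the first equality follows.

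For the second (normal closure) equality I would run the same matrix computation with $I = R$ in place of $I$. This shows that conjugation by $T$ carries $\ETrans_{\Sp}(Q, \langle,\rangle_{\psi_1 \perp \varphi})$ isomorphically onto $\ETrans_{\Sp}(Q, \langle,\rangle_{\psi_1 \perp \varphi^*})$. Combining this with the first equality and the definition of normal closure gives the second equality. There is no real obstacle here; the only points requiring attention are the direction of conjugation (which form plays which role after conjugation) and verifying that the ideal-membership $q \in IP$ is preserved under multiplication by $(1 \perp \varepsilon)$, which is immediate since $\varepsilon$ has entries in $R$.
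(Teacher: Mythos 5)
Your proposal is correct and follows essentially the same route as the paper: the paper's entire proof consists of the conjugation identities $(I_2 \perp (1 \perp \varepsilon))^{-1}\, \rho_{\varphi^*}(q,\alpha)\, (I_2 \perp (1 \perp \varepsilon)) = \rho_{\varphi}((1\perp\varepsilon)^{-1}q,\alpha)$ and the analogous one for $\mu$, which are exactly your identities read in the opposite direction of conjugation. Your added remarks that $q \mapsto (1\perp\varepsilon)q$ preserves $IP$ and that the normal-closure equality follows from the $I=R$ case are the (routine) details the paper leaves implicit.
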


\begin{proof}
For elementary symplectic transvections we have 
\begin{eqnarray*}
(I_2 \perp (1 \perp \varepsilon))^{-1} \rho_{\varphi^*}(q, \alpha) (I_2
  \perp (1 \perp \varepsilon) = \rho_{\varphi} ((1 \perp
  \varepsilon)^{-1} q, \alpha),\\ (I_2 \perp (1 \perp
  \varepsilon))^{-1} \mu_{\varphi^*}(q, \beta) (I_2 \perp (1 \perp
  \varepsilon) = \mu_{\varphi} ((1 \perp
  \varepsilon)^{-1} q, \beta).
\end{eqnarray*}
Hence the equalities follow.
\end{proof}

\begin{lem} \label{commESp(I)} Let $R$ be a ring with $R=2R$, $I$ be an ideal of $R$,
and $n \ge 2$. Let $\varepsilon =
  \varepsilon_1 \ldots \varepsilon_r$,
  where each $\varepsilon_k$ is an elementary generator of $\ESp_{2n}^1(I)$.  If
  $se_{ij}(a(X)) \in \ESp_{2n}^1(I[X])$, then
\begin{eqnarray*}
\varepsilon ~ se_{ij}(Y^{4^r} a(X)) ~ \varepsilon^{-1} 
&=& \prod_{t=1}^s se_{i_t j_t}(Y b_t(X,Y)),
\end{eqnarray*}
where $b_t (X,Y) \in I[X, Y]$, and either $i_t =1$ or $j_t =1$.
\end{lem}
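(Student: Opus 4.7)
The plan is to mimic the proof of Lemma \ref{commE(I)} via induction on $r$, replacing the single linear commutator identity $[E_{ij}(a), E_{jk}(b)] = E_{ik}(ab)$ by the two symplectic commutator identities recorded in Remark \ref{inclusion-symplectic}. The hypothesis $R = 2R$ is used to divide by the factor of $2$ appearing in the identity $[se_{ik}(a), se_{k\sigma(i)}(b)] = se_{i\sigma(i)}(2ab)$, while the hypothesis $n \ge 2$ (equivalently $2n \ge 4$) guarantees enough distinct indices to pick an intermediate index $k$ avoiding any forbidden values among $\{1, i, j, \sigma(i), \sigma(j)\}$.

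Base case $r=1$. Here $\varepsilon = se_{pq}(c)$ with $c \in I$ and either $p=1$ or $q=1$, while $se_{ij}(Y^4 a(X))$ has either $i=1$ or $j=1$. In the sub-cases in which $\varepsilon$ and $se_{ij}(Y^4 a(X))$ already commute, there is nothing to prove. In the non-trivial sub-cases one factors $Y^4 = Y^2 \cdot Y^2$ and rewrites
\begin{eqnarray*}
se_{ij}(Y^4 a(X)) &=& [se_{ik}(Y^2 a(X)), se_{kj}(Y^2)],
\end{eqnarray*}
using the first commutator identity when $i \ne \sigma(j)$ and the second one when $i = \sigma(j)$ (in the latter case absorbing a factor $1/2 \in R$ into $a(X)$). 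Conjugating this commutator by $\varepsilon$ and expanding via Hall--Witt--type manipulations exactly as in Cases (3) and (4) of the proof of Lemma \ref{commE(I)}, but with the symplectic commutator identities in place of the linear one, rewrites the conjugate as a product of symplectic elementary generators of the form $se_{i_t j_t}(Y b_t(X,Y))$ with $b_t \in I[X, Y]$ and with either $i_t = 1$ or $j_t = 1$ in every factor.

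Inductive step. Assume the result for products of $r-1$ elementary generators. Substituting $Y \mapsto Y^4$ into the induction hypothesis, so that $Y^{4^{r-1}} a(X)$ becomes $Y^{4^r} a(X)$, yields
\begin{eqnarray*}
\varepsilon_2 \cdots \varepsilon_r ~ se_{ij}(Y^{4^r} a(X)) ~ \varepsilon_r^{-1} \cdots \varepsilon_2^{-1} &=& \prod_t se_{p_t q_t}(Y^4 d_t(X, Y^4)),
\end{eqnarray*}
for some polynomials $d_t$ and indices with $p_t = 1$ or $q_t = 1$. Since each of these factors has its parameter of the shape $Y^4 \cdot (\text{polynomial in } I[X,Y])$, we may now apply the base case to conjugate each factor by the single generator $\varepsilon_1$; assembling the results gives $\varepsilon \, se_{ij}(Y^{4^r} a(X)) \, \varepsilon^{-1}$ as a product $\prod_s se_{i_s j_s}(Y b_s(X, Y))$ of the required form.

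The main obstacle is the case analysis in the base step: beyond the four linear cases that appear in Lemma \ref{commE(I)}, one must separately treat the \emph{short-root} configurations in which $i = \sigma(j)$ or $p = \sigma(q)$, where the commutator identity carrying the factor $2$ must be invoked and extra ${\sigma(j)\sigma(i)}$-entries generated by the symplectic generator definition must be accounted for. Each such sub-case is finite and, after using $R = 2R$ to clear the factor $2$, reduces to a direct manipulation with the identities in Remark \ref{inclusion-symplectic} that runs exactly parallel to the linear computation.
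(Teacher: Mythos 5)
Your proposal is correct and follows essentially the same route as the paper's proof: induction on $r$, a base-case analysis that splits $Y^4=Y^2\cdot Y^2$ and rewrites $se_{ij}(Y^4a(X))$ via the symplectic commutator identities (with $R=2R$ used to absorb the factor $2$ in the short-root relation, and $n\ge 2$ supplying the auxiliary index $k$), followed by the inductive step in which $Y\mapsto Y^4$ is substituted into the induction hypothesis and each resulting factor is conjugated by $\varepsilon_1$ using the single-generator case. The paper simply carries out the twelve explicit sub-cases that your sketch defers to a "direct manipulation."
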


\begin{proof}
We prove this result using induction on $r$, where $\varepsilon$ is
product of $r$ many elementary generators. First we state the following identities
\begin{align}\label{commprop}
[gh,k] &~ = ~ \big({}^g[h,k]\big)[g,k],\\
[g,hk] &~ = ~ [g,h]\big({}^h[g,k]\big),\\
{}^g[h,k] &~ = ~ [{}^gh,{}^gk],
\end{align}
where $^gh$ denotes $ghg^{-1}$ and $[g,h]=ghg^{-1}h^{-1}$. These
identities will be used throughout the proof. Also, we need the following 
relations which hold for all integers $i,j$ with $i \neq j, \sigma(j)$, and for 
all $a, b \in R$.
\begin{align}\label{commprop2}
[se_{i \sigma(i)}(a), ~ se_{\sigma(i) j}(b)] &~ = ~ se_{ij}(ab) ~ se_{\sigma(j) j}((-1)^{i+j}ab^2), \\ 
[se_{ik}(a), ~ se_{kj}(b)] & ~ = ~ se_{ij}(ab), {\rm if} ~ k \ne \sigma(i), \sigma(j), \\ 
[se_{ik}(a), ~ se_{k \sigma(i)}(b)] & ~ = ~ se_{i \sigma(i)}(2ab), {\rm if} ~ k \ne i, \sigma(i).
\end{align}
The following relation holds for all $i, j$ with $1 \le i\ne j \le
2n$, and for all $a, b \in R$.
\begin{align}
[se_{ij}(a), ~ se_{kl}(b)] & ~=~ Id., {\rm if} ~ i \ne l, \sigma(k) ~ {\rm
  and} ~ j \ne k, \sigma(l).
\end{align}
Let us assume $r=1$, i.e, $\varepsilon
= se_{pq}(c)$, where $c \in I$. Also, for the elementary generator $se_{ij}(Y^{4^r} a(X))$
we assume $i=1$. First we consider the case when $j = \sigma(i)$, i.e, $j =2$.

{\it Case $($1$)$:} Let $(p,q)=(1,2)$. In this case we get that 
\begin{eqnarray*}
se_{12}(c) ~ se_{12}(Y^4 a(X)) ~ se_{12}(-c) &=& se_{12}(Y^4 a(X)).
\end{eqnarray*}

{\it Case $($2$)$:} Let $(p,q) = (1,k)$ and $k \ne 2$. In this case we get that 
\begin{eqnarray*}
se_{1k}(c) ~ se_{12}(Y^4 a(X)) ~ se_{12}(-c) &=& se_{12}(Y^4 a(X)).
\end{eqnarray*}

{\it Case $($3$)$:} Let $(p,q)=(2, 1)$. In this case we get that 
\begin{eqnarray*}
&& se_{21}(c) ~ se_{12}(Y^4 a(X)) ~ se_{21}(-c) \\ 
&=& ^{se_{21}(c)} [se_{1k}(Y^2), ~ se_{k 2}(Y^2 a'(X))], ~{\rm where}~k \ne 1,2 ~ {\rm and} a'(X) = a(X)/2\\ 
&=& [^{se_{21}(c)} se_{1k}(Y^2), ~ se_{2 \sigma(k)}((-1)^{k+1} Y^2 c a'(X)) \\
&& se_{k \sigma(k)}((-1)^{k+1} Y^4 c a'(X)^2) ~ se_{k 2}(Y^2 a'(X))] 
\end{eqnarray*}
\begin{eqnarray*}
&=& [\alpha, ~ se_{k 1}(-Y^2 c a'(X)) ~ se_{k \sigma(k)}((-1)^{k+1} Y^4 c a'(X)^2) 
 ~ se_{k 2}(Y^2 a'(X))] \\ 
 &=& ^{\alpha} se_{k 1}(-Y^2 c a'(X)) ~ ^{\alpha} se_{k \sigma(k)}((-1)^{k+1} Y^4 c a'(X)^2) 
 ~ ^{\alpha} se_{k 2}(Y^2 a'(X)) \\
 && se_{k 2}(-Y^2 a'(X)) ~ se_{k \sigma(k)}((-1)^k Y^4 c a'(X)^2) ~ se_{k 1}(Y^2 c a'(X)) \\
 &=& ^{\alpha} se_{k 1}(-Y^2 c a'(X)) ~ ^{\alpha} se_{k \sigma(k)}((-1)^{k+1} Y^4 c a'(X)^2) 
 ~ ^{\alpha} se_{k 2}(Y^2 a'(X)) \\
 && se_{k 2}(-Y^2 a'(X)) ~ [se_{k 1}(Y^2 c), ~ se_{1 \sigma(k)}((-1)^k Y^2 a'(X))] ~ se_{k 1}(Y^2 c a'(X)).
\end{eqnarray*}

The element $\alpha = ^{se_{21}(c)} se_{1k}(Y^2)$ in the above computation can be expressed in the following three different ways, namely,
\begin{eqnarray*}
\alpha &=& se_{1 k}(Y^2) ~ se_{2 k}(Y^2 c) ~ se_{\sigma(k) k}(Y^4 c) \\ 
&=& se_{2 k}(Y^2 c) ~ se_{\sigma(k) k}(Y^4 c) ~ se_{1k}(Y^2) \\ 
&=& se_{\sigma(k) k}(Y^4 c) ~ se_{2 k}(Y^2 c) ~ se_{1k}(Y^2).
\end{eqnarray*} 
We will use the above expressions of $\alpha$ as and when we find it convenient. Now 
\begin{eqnarray*}
&& ^{\alpha} se_{k 1}(-Y^2 c a'(X)) \\
&=& ^{se_{1 k}(Y^2) ~ se_{2 k}(Y^2 c) ~ se_{\sigma(k) k}(Y^4 c)} se_{k 1}(-Y^2 c a'(X))  \\ 
&=& ^{se_{1 k}(Y^2) ~ se_{2 k}(Y^2 c)} \big( se_{\sigma(k) 1}(-Y^6c^2 a'(X)) ~ se_{21}((-1)^k Y^8 c^3 a'(X)^2) \\
&&  se_{k 1}(-Y^2 c a'(X)) \big) \\ 
&=& ^{se_{1k}(Y^2)} \big( se_{\sigma(k) 1}(-Y^6c^2 a'(X)) ~  se_{21}((-1)^k Y^8 c^3 a'(X)^2 - 2 Y^4 c^2 a'(X)) \big) \\ 
&=& ^{se_{1k}(Y^2)} \big( se_{\sigma(k) 1}(-Y^6c^2 a'(X)) ~  se_{21}(Y^4 \circledast) \big) ~{\rm where} ~ \circledast \in I^2[X] \\ 
&=& se_{\sigma(k) k}(Y^8 c^2 a'(X)) ~ se_{\sigma(k) 1}(-Y^6c^2 a'(X)) ~ se_{2k}(Y^6 \circledast) ~ se_{\sigma(k) k}((-1)^{k+1}Y^8 \circledast)\\
&=& [se_{\sigma(k) 1}(Y^4 c^2), ~ se_{1 k}(Y^4 a'(X)] ~ se_{\sigma(k) 1}(-Y^6c^2 a'(X)) ~ se_{2k}(Y^6 \circledast) \\
&&  [se_{\sigma(k) 1}((-1)^{k+1} Y^4 *), ~ se_{1 k}(Y^4 *)], \\
&& ^{\alpha} se_{k 2}(Y^2 a'(X)) \\
&=& ^{se_{2 k}(Y^2 c) ~ se_{\sigma(k) k}(Y^4 c) ~ se_{1k}(Y^2)} se_{k 2}(Y^2 a'(X)) \\ 
&=& ^{se_{2 k}(Y^2 c) ~ se_{\sigma(k) k}(Y^4 c)} \big( se_{12}(2 Y^4 a'(X)) ~  se_{k 2}(Y^2 a'(X)) \big) \\ 
&=& ^{se_{2 k}(Y^2 c)} \big( se_{12}(2 Y^4 a'(X)) ~ se_{\sigma(k) 2}(Y^6 c a'(X)) ~ se_{12}((-1)^{k+1} y^8 c a'(X)^2) \\  &&se_{k 2}(Y^2 a'(X)) \big), \\
&& ^{\alpha} se_{k \sigma(k)}((-1)^{k+1} Y^4 c a'(X)^2) \\
&=& ^{se_{\sigma(k) k}(Y^4 c) ~ se_{2 k}(Y^2 c) ~ se_{1k}(Y^2)} ~ se_{k \sigma(k)}((-1)^{k+1} Y^4 c a'(X)^2) \\ 
&=& ^{se_{\sigma(k) k}(Y^4 c) ~ se_{2 k}(Y^2 c)} \big( se_{k2}((-1)^k Y^6 c a'(X)^2) ~ se_{12}(Y^8 c a'(X)^2) \\
&& se_{k \sigma(k)}((-1)^{k+1} Y^4 c a'(X)^2) \big) \\ 
&=& XYZ ~{\rm(say)},
\end{eqnarray*} 
where
\begin{eqnarray*}
X &=& ^{se_{\sigma(k) k}(Y^4 c) ~ se_{2 k}(Y^2 c)}  se_{k2}((-1)^k Y^6 c a'(X)^2) \\ 
&=& ^{se_{2 k}(Y^2 c) ~ se_{\sigma(k) k}(Y^4 c)}  se_{k2}((-1)^k Y^6 c a'(X)^2) \\ 
&=& ^{se_{2 k}(Y^2 c)} \big(  se_{\sigma(k) 2}((-1)^k Y^8 c^2 a'(X)^2)  ~ se_{12} ((-1)^{k+1}Y^{16} c^3 a'(X)^4) \\
&&  se_{k2}((-1)^k Y^6 c a'(X)^2)\big), \\
Y &=& ^{se_{\sigma(k) k}(Y^4 c) ~ se_{2 k}(Y^2 c)} se_{12}(Y^8 c a'(X)^2) \\ 
&=&  ^{se_{\sigma(k) k}(Y^4 c)} \big( se_{1k}(Y^8 c^2 a'(X)^2) ~ se_{\sigma(k) k}((-1)^k Y^{12} c^3 a'(X)^2) \\
&&  se_{12}(Y^8 c a'(X)^2) \big) \\
&=& se_{1k}(Y^8 c^2 a'(X)^2) ~ se_{\sigma(k) k}((-1)^k Y^{12} c^3 a'(X)^2) ~ se_{12}(Y^8 c a'(X)^2), \\
Z &=& ^{se_{\sigma(k) k}(Y^4 c) ~ se_{2 k}(Y^2 c)} se_{k \sigma(k)}((-1)^{k+1} Y^4 c a'(X)^2) \\ 
&=& ^{se_{\sigma(k) k}(Y^4 c)} \big( se_{k 1}(-Y^6 c^2 a'(X)^2) ~ se_{21}((-1)^k Y^8 c^3 a'(X)^2) \\ 
&& se_{k \sigma(k)}((-1)^{k+1} Y^4 c a'(X)^2) \big) \\
&=& se_{\sigma(k) 1}(-Y^{10}c^3 a'(X)^2)  ~ se_{21}((-1)^kY^{16} c^5 a'(X)^4) ~ se_{21}((-1)^k Y^8 c^3 a'(X)^2) \\
&& ^{se_{\sigma(k) k}(Y^4 c)} se_{k \sigma(k)}((-1)^{k+1} Y^4 c a'(X)^2) \\
&=& se_{\sigma(k) 1}(-Y^{10}c^3 a'(X)^2)  ~ se_{21}((-1)^kY^{16} c^5 a'(X)^4) ~ se_{21}((-1)^k Y^8 c^3 a'(X)^2) \\
&& ^{se_{\sigma(k) k}(Y^4 c)} [se_{k 1}(Y^2 a'(X)^2), ~ se_{1 \sigma(k)}((-1)^{k+1} Y^2 c)] \\
&=& se_{\sigma(k) 1}(-Y^{10}c^3 a'(X)^2)  ~ se_{21}((-1)^kY^{16} c^5 a'(X)^4) ~ se_{21}((-1)^k Y^8 c^3 a'(X)^2) \\
&& [se_{\sigma(k) 1}(Y^6 c a'(X)^2) ~ se_{21}((-1)^k Y^8 c a'(X)^4 ~ se_{k 1}(Y^2 a'(X)^2), \\
&& se_{\sigma(k) 2}(Y^6 c^2) ~ se_{12}((-1)^{k+1} Y^8 c^3) ~ se_{1 \sigma(k)}((-1)^{k+1} Y^2 c)].
\end{eqnarray*}

{\it Case $($4$)$:} Let $(p,q)=(k,1)$, where $k \ne 2$. In this case we get that 
\begin{eqnarray*}
&& se_{k1}(c) ~ se_{12}(Y^4 a(X)) ~ se_{k1}(-c) \\
&=& [se_{12}(Y^4 a(X)), ~ se_{2 \sigma(k)}((-1)^k c)]^{-1} se_{12}(Y^4 a(X))\\
&=& \{ se_{1 \sigma(k)}((-1)^k Y^4 c a(X)) ~ se_{k \sigma(k)}((-1)^k Y^4 c^2 a(X)) \}^{-1} ~ se_{12}(Y^4 a(X))\\
&=& \{ se_{1 \sigma(k)}((-1)^k Y^4 c a(X)) ~ [se_{k 1}((-1)^k Y^2 c^2), ~ se_{1 \sigma(k)}(Y^2 a(X))] \}^{-1}\\
&& se_{12}(Y^4 a(X)).
\end{eqnarray*}

Next we consider the elementary generator $se_{ij}(Y^{4^r} a(X))$ with $i=1$ and 
$j \ne \sigma(i)$.

\medskip

{\it Case $($5$)$:} Let $(p,q)=(1,2)$. In this case we get that
\begin{eqnarray*}
se_{12}(c) ~ se_{1j}(Y^4 a(X)) ~ se_{12}(-c) &=& se_{1j}(Y^4 a(X)).
\end{eqnarray*}

{\it Case $($6$)$:} Let $(p,q) = (1,k)$ and $k \ne 2, j, \sigma(j)$. In this case we get that 
\begin{eqnarray*}
se_{1k}(c) ~ se_{1j}(Y^4 a(X)) ~ se_{1k}(-c) &=& se_{1j}(Y^4 a(X)).
\end{eqnarray*}

{\it Case $($7$)$:} Let $(p,q) = (1,j)$. In this case we get that 
\begin{eqnarray*}
se_{1j}(c) ~ se_{1j}(Y^4 a(X)) ~ se_{1j}(-c) &=& se_{1j}(Y^4 a(X)).
\end{eqnarray*}

{\it Case $($8$)$:} Let $(p,q) = (1, \sigma(j))$ and $k \ne 2, j, \sigma(j)$. In this case we get that
\begin{eqnarray*}
se_{1 \sigma(j)}(c) ~ se_{1j}(Y^4 a(X)) ~ se_{1 \sigma(j)}(-c) &=& se_{12}((-1)^jY^4a(X)) ~ se_{1j}(Y^4 a(X)).
\end{eqnarray*}

{\it Case $($9$)$:} Let $(p,q) = (2, 1)$. In this case we get that
\begin{eqnarray*}
&& se_{21}(c) ~ se_{1j}(Y^4 a(X)) ~ se_{21}(-c) \\
&=& se_{2 j}(Y^4 c a(X)) ~ se_{\sigma(j) j}((-1)^j Y^8 c a(X)^2) ~ se_{1j}(Y^4 a(X)) \\
&=& se_{\sigma(j) 1}((-1)^{j+1} Y^4 c a(X)) ~ [se_{\sigma(j) 1}((-1)^4 c/2), ~ se_{1 j}(Y^4 a(X)^2)] \\
&& ~ se_{1j}(Y^4 a(X)).
\end{eqnarray*}

{\it Case $($10$)$:} Let $(p,q) = (k, 1)$ and $k \ne 2, j, \sigma(j)$. In this case we get that
\begin{eqnarray*}
&& se_{k 1}(c) ~ se_{1j}(Y^4 a(X)) ~ se_{k 1}(-c) \\
&=& se_{kj}(Y^4 c a(X)) ~ se_{1j}(Y^4 a(X)) \\
&=& [se_{k1}(Y^2 c), ~ se_{1j}(Y^2 a(X)] ~ se_{1j}(Y^4 a(X)).
\end{eqnarray*}

{\it Case $($11$)$:} Let $(p,q) = (j, 1)$. In this case we get that
\begin{eqnarray*}
&& se_{j 1}(c) ~ se_{1j}(Y^4 a(X)) ~ se_{j 1}(-c) \\
&=& se_{j 1}(c) ~ [se_{1k}(Y^2 a(X)), ~ se_{kj}(Y^2)] se_{j 1}(-c) \\
&=& [se_{jk}(Y^2 c a(X) ~ se_{1k}(Y^2 a(X)), ~ se_{k1}(-Y^2 c) ~ se_{kj}(Y^2)] \\
&=& se_{jk}(Y^2 c a(X) ~ se_{1k}(Y^2 a(X)) ~ se_{k1}(-Y^2 c) ~ se_{kj}(Y^2) \\ 
&& se_{1k}(-Y^2 a(X)) ~ se_{jk}(-Y^2 c a(X) ~ se_{kj}(-Y^2) ~ se_{k1}(Y^2 c) \\
&=& se_{jk}(Y^2 c a(X) ~ se_{1k}(Y^2 a(X)) ~ se_{k1}(-Y^2 c) ~ se_{1k}(-Y^2 a(X)) \\
&& se_{1k}(Y^2 a(X)) ~ se_{kj}(Y^2) ~ se_{1k}(-Y^2 a(X)) ~ se_{kj}(-Y^2) \\
&& se_{kj}(Y^2) ~ se_{jk}(-Y^2 c a(X) ~ se_{kj}(-Y^2) ~ se_{k1}(Y^2 c) \\
&=& [se_{j1}(Y c), ~ se_{1k}(Y a(X)] ~ se_{1k}(Y^2 a(X)) ~ se_{k1}(-Y^2 c) \\
&& se_{1k}(-Y^2 a(X)) ~ se_{1j}(Y^4 a(X)) ~ se_{kj}(Y^2) \\
&& [se_{j1}(Y c), ~ se_{1k}(-Y a(X)] ~ se_{kj}(-Y^2) ~ se_{k1}(Y^2 c) \\
&=& [se_{j1}(Y c), ~ se_{1k}(Y a(X)] ~ se_{1k}(Y^2 a(X)) ~ se_{k1}(-Y^2 c) \\
&& se_{1k}(-Y^2 a(X)) ~ se_{1j}(Y^4 a(X)) ~ [se_{k1}(Y^3 c) ~ se_{j1}(Y c), \\
&& se_{1j}Y^3 a(X) ~ se_{1k}(Y a(X)] ~ se_{k1}(Y^2 c).
\end{eqnarray*}

{\it Case $($12$)$:} Let $(p,q) = (\sigma(j), 1)$. In this case we get that
\begin{eqnarray*}
&& se_{\sigma(j) 1}(c) ~ se_{1j}(Y^4 a(X)) ~ se_{\sigma(j) 1}(-c) \\
&=& se_{\sigma(j) j}(2Y^4 c a(X)) ~ se_{1j}(Y^4 a(X)) \\
&=& [se_{\sigma(j) 1}(Y^2 c), ~ se_{1 j}(Y^2 a(X))] ~ se_{1j}(Y^4 a(X)).
\end{eqnarray*}

Hence the result is true when $i=1$ and $\varepsilon$ is an elementary
generator. Carrying out similar computations one can show that the result is true
when $j=1$ and $\varepsilon$ is an elementary generator.
Let us assume that the result is true when $\varepsilon$ is a product of $r-1$
many elementary generators, i.e, $\varepsilon_2 \ldots \varepsilon_r ~
se_{ij}(Y^{4^{r-1}} a(X)) ~ \varepsilon_r^{-1} \ldots
\varepsilon_2^{-1} = \prod_{t=1}^k se_{p_t q_t}(Y d_t(X, Y))$, where $d_t (X, Y) \in I[X, Y]$,
and either $p_t =1$ or $q_t =1$. We now establish the result when $\varepsilon$ is product 
of $r$ many
elementary generators. We have
\begin{eqnarray*}
&& \varepsilon ~ se_{ij}(Y^{4^r} a(X)) ~ \varepsilon^{-1}  \\
& = & \varepsilon_1 \varepsilon_2 \ldots \varepsilon_r ~ se_{ij}(Y^{4^r} a(X)) 
~ \varepsilon_r^{-1} \ldots \varepsilon_2^{-1} \varepsilon_1^{-1} \\
& = & \varepsilon_1 ~ \big( \prod_{t=1}^k se_{p_t q_t}(Y^4 d_t(X, Y)) \big) ~ \varepsilon_1^{-1}  \\
& = & \prod_{t=1}^k \varepsilon_1 ~ se_{p_t q_t}(Y^4 d_t(X, Y)) ~ \varepsilon_1^{-1} \\
& = & \prod_{t=1}^s se_{i_t j_t}(Y b_t(X, Y)). 
\end{eqnarray*}

To get the last equality one needs to repeat the computation which was
done for a single elementary generator. Note that here $b_t(X,Y) \in I[X,Y]$, and  
either $i_t=1$ or $j_t=1$.
\end{proof}

Next we establish a dilation principle for the relative elementary symplectic
transvection group $\ETrans_{\Sp}(IQ, \langle, \rangle)$. Note that similar dilation principles
for $\ETrans_{\Sp}(Q, \langle, \rangle)$ and  $\ETrans_{\Sp}(Q, IQ, \langle, \rangle)$  were 
proved in \cite{bbr}, Proposition 3.1 and in \cite{cr2}, Lemma 5.19 respectively.

\begin{lem} \label{symp,dil,true-rel} Let $R$ be a commutative ring
  with $R=2R$, and let $I$ be an ideal of $R$. Let $(P,\langle ,
  \rangle)$ be a symplectic $R$-module with $P$ finitely generated
  projective $R$-module of rank $2n$, $n \ge 1$ and $Q=(R^2 \oplus P)$ 
  with the induced form
  on $(\mathbb{H}(R)~\oplus~P)$. Suppose that $a$ is a
  non-nilpotent element of $R$ such that $P_a$ is a free $R_a$ module
  and the bilinear form $\langle, \rangle$ corresponds to the
  alternating matrix $\varphi$ (with respect to some basis). Assume that
  $\varphi = (1 \perp \varepsilon)^t ~ \psi_n ~ (1 \perp
  \varepsilon)$, for some $\varepsilon \in \E_{2n-1}(R_a)$. Let
  $\alpha(X) \in \Aut(Q[X])$, with $\alpha(0) = Id$, and $\alpha(X)_a 
  \in \ETrans_{\Sp}(IQ_a[X],\langle , \rangle_{\psi_1 \perp
    \varphi})$. Then, there exists $\alpha^*(X) \in 
    \ETrans_{\Sp}(IQ[X],\langle , \rangle)$, with $\alpha^*(0) =
  Id.$, such that $\alpha^*(X)$ localises to $\alpha(bX)$, for $b \in
  (a^d)$, $d \gg 0$.
\end{lem}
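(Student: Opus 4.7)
The plan is to imitate the dilation proof of Lemma \ref{linear,dil,true-rel}, replacing the linear ingredients by their symplectic analogues developed above. I would first use the assumption $\varphi = (1 \perp \varepsilon)^t \psi_n (1 \perp \varepsilon)$ together with Lemma \ref{phi=phi*,true-relative} to conjugate $\alpha(X)_a$ by $(I_3 \perp \varepsilon)$, so that
$$\beta(X) := (I_3 \perp \varepsilon)\,\alpha(X)_a\,(I_3 \perp \varepsilon)^{-1}$$
lies in $\ETrans_{\Sp}(IQ_a[X], \langle,\rangle_{\psi_{n+1}})$. By Lemma \ref{symp,free,true-rel} this group equals $\ESp^1_{2n+2}(I_a[X])$, and $\beta(0) = Id$ since $\alpha(0) = Id$. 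Arguing exactly as in the linear case, one can then write
$$\beta(X) = \prod_t \gamma_t \, se_{i_t j_t}(X f_t(X))\, \gamma_t^{-1},$$
where each $\gamma_t$ is a product of at most $r$ elementary generators of $\ESp^1_{2n+2}(I_a)$, $f_t(X) \in I_a[X]$, and either $i_t = 1$ or $j_t = 1$.

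Next, after substituting $X \mapsto Y^{4^r} X$, Lemma \ref{commESp(I)} applied factor-by-factor yields
$$\beta(Y^{4^r} X) = \prod_k se_{i_k j_k}\bigl(Y\, g_k(X,Y)\bigr),$$
with $g_k \in I_a[X,Y]$ and $i_k = 1$ or $j_k = 1$. A common denominator writes $g_k = h_k/a^m$ with $h_k \in I[X,Y]$. Using the relations (\ref{relatn5}) and (\ref{relatn6}), each factor with $i_k = 1$ or $j_k = 1$ is a single elementary symplectic transvection of type $\rho_{\psi_n}$ or $\mu_{\psi_n}$ on $Q_a$ in the standard form. Conjugating back by $(I_3 \perp \varepsilon)^{-1}$ and applying the identities from the proof of Lemma \ref{phi=phi*,true-relative} rewrites the product as a product of elementary symplectic transvections $\rho_\varphi(q_k, \alpha_k)$ or $\mu_\varphi(q_k, \beta_k)$ on $Q_a$ for the form $\psi_1 \perp \varphi$, with data in $IP_a[X,Y]$ and $IR_a[X,Y]$ each carrying a distinguished factor of $Y$.

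To produce $\alpha^*(X)$, I would choose $s \geq 0$ so that $\widetilde{p_i} := a^s p_i \in P$ for every basis element $p_i$ of $P_a$, and substitute $Y \mapsto a^{d'} Y$ for $d'$ large enough that $a^{d'}$ absorbs both the common denominator $a^m$ and the additional powers of $a$ needed to move from $p_i$ to $\widetilde{p_i}$. Setting $Y = 1$ then expresses $\alpha(a^d X)_a$, with $d = 4^r d'$, as a finite product of elementary symplectic transvections whose data lie in $IP[X]$ and $IR[X]$. Defining $\alpha^*(X)$ by the same product but interpreted directly on $Q[X]$ using the intrinsic form $\langle,\rangle$ (without localising) yields an element of $\ETrans_{\Sp}(IQ[X], \langle,\rangle)$ with $\alpha^*(0) = Id$ which localises to $\alpha(bX)$ for any $b \in (a^d)$.

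The step I expect to be most delicate is this last $Y$-substitution and lift. In $\rho_\varphi, \mu_\varphi$ the data $q$ appears both linearly (in the $p - aq$ component) and quadratically (through the $\langle p, q\rangle$-type term in the scalar component), so the exponent $d'$ has to be chosen large enough to simultaneously clear all denominators arising in both types of contributions. This symplectic-specific bookkeeping is not present in Lemma \ref{linear,dil,true-rel}, but it is entirely parallel to the accounting already carried out in Lemma 5.19 of \cite{cr2}, so the construction goes through once the powers are tracked carefully.
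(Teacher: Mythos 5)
Your proposal is correct and follows essentially the same route as the paper: conjugate by $(I_3 \perp \varepsilon)$ to reduce to $\ESp^1_{2n+2}(I_a[X])$ via Lemmas \ref{symp,free,true-rel} and \ref{phi=phi*,true-relative}, apply Lemma \ref{commESp(I)} after the substitution $X \mapsto Y^{4^r}X$, identify the resulting $se_{1j}$ and $se_{i1}$ factors as elementary transvections $\mu_\varphi$ and $\rho_\varphi$ after conjugating back, and then dilate $Y \mapsto a^{d'}Y$ and set $Y=1$ to lift. The only cosmetic difference is that the paper writes out the matrices of the $se$-generators explicitly rather than invoking (\ref{relatn5})--(\ref{relatn6}), and the bookkeeping is in fact slightly simpler than you fear since $\rho_\varphi(q,\alpha)$ and $\mu_\varphi(q,\beta)$ depend linearly on their data, so $d'=s+m$ suffices.
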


\begin{proof}
We have $P_a \cong R_a^{2n}$. Let $e_1, \ldots, e_{2n+2}$ be the standard basis of $Q_a$ with respect to which the bilinear form on $Q_a$ will correspond to $\psi_1 \perp \varphi$. It is given that $\alpha(X)_a \in \ETrans_{\Sp}(IQ_a[X], \langle, \rangle_{\psi_1 \perp \varphi})$. Using Lemma \ref{symp,free,true-rel}, and Lemma
\ref{phi=phi*,true-relative} we get that $\ETrans_{\Sp}(IQ_a[X], \langle, \rangle_{\psi_1 \perp \varphi}) = (I_3 \perp \varepsilon)^{-1} ~ \ESp_{2n+2}^1(I_a[X]) ~ (I_3 \perp \varepsilon)$. Therefore, $\alpha(X)_a = (I_3 \perp \varepsilon)^{-1} ~\beta(X)~ (I_3 \perp \varepsilon)$, for some $\beta(X) \in \ESp_{2n+2}^1(I_a[X])$, with $\beta(0)=Id.$ Hence we can write $\beta(X) = \prod_t \gamma_t se_{i_t j_t}(X f_t(X)) \gamma_t^{-1}$, where $se_{i_t j_t}(X f_t(X)) \in \ESp_{2n+2}^1(I_a[X])$, and $\gamma_t \in \ESp_{2n+2}^1(I_a)$. Using Lemma \ref{commESp(I)} we get that $\beta(Y^{4^r}X) = \prod_k se_{i_k j_k}(Y h_k(X,Y)/ a^m)$, with either $i_k =1$ or $j_k=1$, and $h_k(X,Y) \in I[X,Y]$. Note that  
\begin{eqnarray*}
se_{12}(Y h_k(X,Y)/ a^m) &=& I_{2n+2} + (Y h_k(X,Y)/ a^m) ~ e_1 ~ e_1^t ~ \psi_{n+1}, \\
se_{1 j_k}(Y h_k(X,Y)/ a^m) &=& I_{2n+2} + (Y h_k(X,Y)/ a^m) ~ e_1 ~ e_{\sigma(j_k)}^t ~ \psi_{n+1}\\
&& + (-1)^{j_k}(Y h_k(X,Y)/ a^m) ~ e_{\sigma(j_k)} ~ e_1^t ~ \psi_{n+1} ~ {\rm for} ~ j_k \ge 3, \\
se_{21}(Y h_k(X,Y)/ a^m) &=& I_{2n+2} + (Y h_k(X,Y)/ a^m) ~ e_2 ~ e_2^t ~ \psi_{n+1}, \\
se_{i_k 1}(Y h_k(X,Y)/ a^m) &=& I_{2n+2} + Y h_k(X,Y)/ a^m) ~ e_{i_k} ~ e_2^t ~ \psi_{n+1} \\
&& + (-1)^{i_k} Y h_k(X,Y)/ a^m) ~ e_2 ~ e_ {i_k}^t ~ \psi_{n+1}, ~ {\rm for} ~ i_k \ge 3. \\
\end{eqnarray*}

Let $\varepsilon_1, \ldots, \varepsilon_{2n}$ be the columns of the matrix $(1 \perp \varepsilon)^{-1} \in \E_{2n}(R_a)$. Let $\widetilde{e_i}$ denote the column vector $(I_3 \perp \varepsilon)^{-1} e_i$ of length $2n+2$. Note that $\widetilde{e_1}=e_1, \widetilde{e_2}=e_2$, and $\widetilde{e_i}^t = (0, 0, \varepsilon_{i-2}^t)$, for $i \ge 3$. Using Lemma \ref{phi=phi*,true-relative}  we can write $\alpha(Y^{4^r}X)_a$ as the product of elements of the form
\begin{eqnarray*}
&I_{2n+2} + (Y h_k(X,Y)/ a^m)  \widetilde{e_1} \widetilde{e_1}^t  \left( \begin{smallmatrix} \psi_1 & 0 \\ 0 & \varphi \end{smallmatrix} \right) = \mu_{\varphi}(0, Y h_k(X,Y)/ a^m), &\\
& I_{2n+2} + (Y h_k(X,Y)/ a^m) \widetilde{e_1} \widetilde{e_{\sigma(j_k)}}^t \left( \begin{smallmatrix} \psi_1 & 0 \\ 0 & \varphi \end{smallmatrix} \right )+ (-1)^{j_k} (Y h_k(X,Y)/ a^m) \widetilde{e_{\sigma(j_k)}} \widetilde{e_1}^1 \left( \begin{smallmatrix} \psi_1 & 0 \\ 0 & \varphi \end{smallmatrix} \right)& \\
&= \mu_{\varphi}(-(Y h_k(X,Y)/ a^m) \varepsilon_{j_k -2}, 0), & \\
&I_{2n+2} + (Y h_k(X,Y)/ a^m) \widetilde{e_2} \widetilde{e_2}^t \left( \begin{smallmatrix} \psi_1 & 0 \\ 0 & \varphi \end{smallmatrix} \right)= \rho_{\varphi}(0, -Y h_k(X,Y)/ a^m), &\\
& I_{2n+2} + (Y h_k(X,Y)/ a^m) \widetilde{e_{i_k}} \widetilde{e_2}^t \left( \begin{smallmatrix} \psi_1 & 0 \\ 0 & \varphi \end{smallmatrix} \right) & \\
&+ (-1)^{i_k} (Y h_k(X,Y)/ a^m) \widetilde{e_2} \widetilde{e_{i_k}}^t \left( \begin{smallmatrix} \psi_1 & 0 \\ 0 & \varphi \end{smallmatrix} \right) = \rho_{\varphi}(-(Y h_k(X,Y)/ a^m) \varepsilon_{\sigma(i_k)-2}, 0), & 
\end{eqnarray*}
 for $i_k, j_k \ge 3$. Let $s \ge 0$ be an integer such that $\widetilde{\varepsilon_i} = a^s \varepsilon_i \in P$ for all $i = 1, \ldots, 2n$. Let $d' = s+m$. Therefore, $\alpha((a^{d'}Y)^{4^r} X)_a$ is the product of elements of the form
 \begin{eqnarray*}
 & \rho_{\varphi}(0, -a^{d'} Y h_k(X, a^{d'} Y)/ a^m), \rho_{\varphi}(-(a^{d'} Y h_k(X, a^{d'} Y)/ a^m) \varepsilon_{j_k -2}, 0), & \\
 & \mu_{\varphi}(0, a^{d'} Y h_k(X, a^{d'} Y)/ a^m), ~{\rm and}~ \mu_{\varphi}(-(a^{d'} Y h_k(X, a^{d'} Y)/ a^m) \varepsilon_{\sigma(i_k)-2}). & 
 \end{eqnarray*}
 
Substituting $Y=1$ we get that $\alpha(a^dX)_a$ is the product of elements of the forms 
\begin{eqnarray*}
& \rho_{\varphi}(0, a^s h'_k(X)), \rho_{\varphi}(a^s h'_k(X) \varepsilon_{j_k-2}, 0), & \\
& \mu_{\varphi}(0, a^s h'_k(X)), ~{\rm and}~ \mu_{\varphi}(a^s h'_k(X) \varepsilon_{\sigma(i_k)-2}, 0), &
\end{eqnarray*}
where $h'_k(X) \in I[X]$.

Let us set $\alpha^*(X)$ to be the product of elements of the forms 
\begin{eqnarray*}
& \rho (0, a^s h'_k(X)), \rho (h'_k(X) \widetilde{\varepsilon_{j_k-2}}, 0), & \\
& \mu (0, a^s h'_k(X)), ~{\rm and}~  \mu (h'_k(X) \widetilde{\varepsilon_{\sigma(i_k)-2}}, 0). &
\end{eqnarray*}

Note that $\alpha^*(X)$ belongs to $\ETrans_{\Sp}(Q[X], \langle, \rangle)$. It is clear from the construction that $\alpha^*(0)=Id.$ and  $\alpha^*(X)$ localises to $\alpha(bX)$, for some $b \in (a^d), d \gg 0$.
\end{proof}

Next we state a Local-Global principle for $\ETrans_{\Sp}(IQ,\langle , \rangle)$.

\begin{lem} \label{symp,LG,true-rel} Let $R$ be a commutative ring with
  $R=2R$, and let $I$ be an ideal of $R$. Let $(P,\langle , \rangle)$
  be a symplectic $R$-module with $P$ finitely generated projective
  module of rank $2n$, $n \ge 1$ and $Q=(R^2 \oplus P)$ with the induced form
  on $(\mathbb{H}(R)~\oplus~P)$. Let $\alpha(X) \in {\Sp}(Q[X])$, with $\alpha(0) = Id$. 
  If for each maximal ideal $\gm$ of $R$, $\alpha(X)_\gm \in
  \ETrans_{\Sp}(IQ_\gm[X],\langle , \rangle_{\psi_1 \perp
    \varphi_\gm})$,
  then $\alpha(X) \in \ETrans_{\Sp}(IQ[X],\langle , \rangle)$.
\end{lem}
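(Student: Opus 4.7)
The plan is to mirror the argument in Lemma \ref{linear,LG,true-rel}, replacing the linear dilation principle by its symplectic counterpart Lemma \ref{symp,dil,true-rel}, and to finish by a partition of unity plus a telescoping identity. The only genuinely new ingredient, compared with the linear case, is that the dilation principle requires the bilinear form on $P_{a_\gm}$ to have the specific normal form $(1 \perp \varepsilon)^t \psi_n (1 \perp \varepsilon)$ with $\varepsilon \in \E_{2n-1}(R_{a_\gm})$; arranging this in a principal open neighborhood of $\gm$ is where the work is.

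First I would carry out the localization step. Fix a maximal ideal $\gm$. Since $P$ is finitely generated projective, $P_\gm$ is free of rank $2n$ and the form there corresponds to an alternating matrix of Pfaffian $1$; by choosing the basis appropriately and invoking Lemma \ref{local,rel} over $R_\gm$, we get $\varphi_\gm = (1 \perp \varepsilon)^t \psi_n (1 \perp \varepsilon)$ with $\varepsilon \in \E_{2n-1}(R_\gm)$. Since the entries of $\varepsilon$, the trivialization of $P$, and the finitely many factorizations witnessing $\alpha(X)_\gm \in \ETrans_{\Sp}(IQ_\gm[X], \langle, \rangle_{\psi_1 \perp \varphi_\gm})$ all involve only finitely many denominators from $R \setminus \gm$, one can pick a single $a_\gm \in R \setminus \gm$ such that (i) $P_{a_\gm}$ is free over $R_{a_\gm}$, (ii) $\varphi_{a_\gm} = (1 \perp \varepsilon)^t \psi_n (1 \perp \varepsilon)$ for some $\varepsilon \in \E_{2n-1}(R_{a_\gm})$, and (iii) $\alpha(X)_{a_\gm} \in \ETrans_{\Sp}(IQ_{a_\gm}[X], \langle, \rangle_{\psi_1 \perp \varphi_{a_\gm}})$.

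Next I would set $\gamma(X,Y) = \alpha(X+Y)\,\alpha(Y)^{-1} \in \Sp(Q[X,Y])$. Then $\gamma(0,Y) = Id$ and $\gamma(X,Y)_{a_\gm}$ still lies in $\ETrans_{\Sp}(IQ_{a_\gm}[X,Y], \langle, \rangle_{\psi_1 \perp \varphi_{a_\gm}})$ for every $\gm$. Treating $Y$ as a parameter, Lemma \ref{symp,dil,true-rel} produces an element $\gamma_\gm^*(X,Y) \in \ETrans_{\Sp}(IQ[X,Y], \langle, \rangle)$ with $\gamma_\gm^*(0,Y) = Id$, which localizes to $\gamma(b_\gm X, Y)$ for some $b_\gm \in (a_\gm^d)$, $d \gg 0$. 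In particular $\gamma(b_\gm X, Y) \in \ETrans_{\Sp}(IQ[X,Y], \langle, \rangle)$.

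Finally I would patch. The ideal generated by all $a_\gm^d$ as $\gm$ ranges over maximal ideals is not contained in any maximal ideal, so $1 = \sum_{i=1}^{k} c_i a_{\gm_i}^d$ for some $c_i \in R$ and maximal ideals $\gm_1,\ldots,\gm_k$. Put $b_i = c_i a_{\gm_i}^d$ and $T_i = b_{i+1}X + \cdots + b_k X$ with $T_k = 0$. The telescoping identity
\begin{eqnarray*}
\alpha(X) = \prod_{i=1}^{k-1} \gamma(b_i X, T_i) \cdot \gamma(b_k X, 0)
\end{eqnarray*}
expresses $\alpha(X)$ as a product of factors each of the form $\gamma(b_i X, T_i)$, which by substituting $Y = T_i$ into the previous step lies in $\ETrans_{\Sp}(IQ[X], \langle, \rangle)$. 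Hence $\alpha(X) \in \ETrans_{\Sp}(IQ[X], \langle, \rangle)$.

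The main obstacle is the normalization in the first step: making sure that a single principal localization $R_{a_\gm}$ simultaneously trivializes $P$, puts the form in the required shape $(1 \perp \varepsilon)^t \psi_n (1 \perp \varepsilon)$ with $\varepsilon$ a global elementary matrix, and carries $\alpha(X)_\gm$ into the true relative elementary transvection group. Once this is in place the argument is a formal adaptation of the linear local-global principle.
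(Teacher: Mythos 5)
Your proposal is correct and follows essentially the same route as the paper, which simply states that the proof of Lemma \ref{linear,LG,true-rel} works verbatim: localize at a suitable $a_\gm$, form $\gamma(X,Y)=\alpha(X+Y)\alpha(Y)^{-1}$, apply the symplectic dilation principle (Lemma \ref{symp,dil,true-rel}), and patch with the partition-of-unity telescoping identity. Your extra care in spreading out the normal form $(1\perp\varepsilon)^t\psi_n(1\perp\varepsilon)$ from $R_\gm$ to a principal localization $R_{a_\gm}$ is exactly the hypothesis the dilation lemma needs, and is handled correctly.
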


\begin{proof}
Proof of Lemma \ref{linear,LG,true-rel} works verbatim for this.
\end{proof}

Before we prove the main result (Theorem \ref{Tits-sympTrans} ) of this section, 
we state a lemma to show that symplectic transvections are homotopic to identity.

\begin{lem} \label{symp-trans-hom-to-identity}
Let $(P, \langle, \rangle)$ be a symplectic $R$-module and $\alpha \in
\Trans_{\Sp}(P, \langle, \rangle)$. Then there exists $\beta(X) \in
\Trans_{\Sp}(P[X], \langle, \rangle)$ such that $\beta(1)=\alpha$ and
$\beta(0)=Id.$
\end{lem}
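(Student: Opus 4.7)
The plan is to reduce to the case of a single symplectic transvection and then build an explicit homotopy by scaling the defining data by $X$. Since $\Trans_{\Sp}(P, \langle , \rangle)$ is by definition the subgroup generated by symplectic transvections, any $\alpha \in \Trans_{\Sp}(P, \langle , \rangle)$ can be written as a product $\sigma_1 \sigma_2 \cdots \sigma_k$ of transvections, and if each factor $\sigma_i$ admits a homotopy $\beta_i(X) \in \Trans_{\Sp}(P[X], \langle, \rangle)$ with $\beta_i(0) = Id$ and $\beta_i(1) = \sigma_i$, then $\beta(X) := \beta_1(X) \beta_2(X) \cdots \beta_k(X)$ satisfies $\beta(0) = Id$ and $\beta(1) = \alpha$. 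So I only need to construct $\beta$ when $\alpha = \sigma$ is a single transvection.

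Suppose $\sigma(p) = p + \langle u, p \rangle v + \langle v, p \rangle u + a \langle u, p \rangle u$, with $\langle u, v \rangle = 0$, $a \in R$, and at least one of $u, v$ unimodular. I propose
\[
\beta(X)(p) \;=\; p + X \langle u, p \rangle v + X \langle v, p \rangle u + X^2 a \langle u, p \rangle u,
\]
viewed as an endomorphism of $P[X]$ extended $R[X]$-linearly. Clearly $\beta(0) = Id$ and $\beta(1) = \sigma$. The only substantive point is that $\beta(X)$ itself is a symplectic transvection of $(P[X], \langle , \rangle)$, and this will be handled by interpreting the data in one of two ways depending on which of $u, v$ is unimodular.

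If $u \in \Um(P)$, I take the transvection data $(u', v', a') = (u, Xv, X^2 a)$. Then $u$ remains unimodular in $P[X]$ (a splitting $\phi \colon P \to R$ of $u$ extends to $P[X] \to R[X]$), and $\langle u, Xv \rangle = X \langle u, v \rangle = 0$, so the defining conditions are satisfied and $\beta(X)$ is the corresponding symplectic transvection. If instead $v \in \Um(P)$, I take the data $(u', v', a') = (Xu, v, a)$; then $v$ is unimodular in $P[X]$ and $\langle Xu, v \rangle = 0$, and the resulting transvection again equals $\beta(X)$ after expanding. Either interpretation places $\beta(X)$ in $\Trans_{\Sp}(P[X], \langle , \rangle)$.

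The main (mild) obstacle is simply bookkeeping: one must check that in both interpretations the expanded formula agrees with the proposed $\beta(X)$, and that unimodularity transfers from $P$ to $P[X]$. Both are routine, and the rest follows from the reduction to a single transvection explained above. This mirrors the linear analogue (Lemma \ref{lin-trans-hom-to-identity}), whose proof in \cite{cr2} is the model.
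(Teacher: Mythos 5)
Your proof is correct, and it is the standard scaling-homotopy argument: the paper itself only cites Lemma 5.22 of \cite{cr2} here, and the argument there is essentially the same (reduce to a single transvection and scale the defining data $(u,v,\alpha)$ by powers of $X$). Your choice of $X^2 a$ for the scalar is a nice touch, since it makes the single formula for $\beta(X)$ compatible with both readings of the transvection data, whichever of $u$, $v$ is unimodular.
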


\begin{proof}
See proof of Lemma 5.22 in \cite{cr2}.
\end{proof}

\begin{thm} \label{Tits-sympTrans} Let $R$ be a commutative ring with $R=2R$,
  and let $I$ be an ideal of $R$. Let $(P, \langle , \rangle)$ be a
  symplectic $R$-module with $P$ a finitely generated projective module
  of rank $2n$, $n \ge 2$ and $Q=(R^2 \oplus P)$ with the induced form
  on $(\mathbb{H}(R)~\oplus~P)$. Then $\ETrans_{\Sp}(Q,I^4Q) \subseteq \ETrans_{\Sp}(IQ)$.
\end{thm}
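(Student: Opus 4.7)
The plan is to mimic the argument used for Theorem \ref{Tits-linTrans}, combining the elementary symplectic free-case result (Theorem \ref{tits-symp}) with a homotopy, a Local--Global principle (Lemma \ref{symp,LG,true-rel}) and the identifications relating $\ETrans_{\Sp}$ groups with $\ESp^1_{2n+2}$ and $\ESp_{2n+2}$ (Lemmas \ref{symp,free,true-rel} and \ref{symp,free,rel}).

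First I would take an arbitrary element $\alpha \in \ETrans_{\Sp}(Q, I^4 Q)$ and write it as a product of conjugates $g \,\rho(q,\alpha_0)\, g^{-1}$ and $g\,\mu(q,\beta_0)\, g^{-1}$ with $g \in \ETrans_{\Sp}(Q)$, $q \in I^4 P$, $\alpha_0, \beta_0 \in I^4$. Using Lemma \ref{symp-trans-hom-to-identity} (applied to $g$, $\rho$, $\mu$) I build a lift $\alpha(X) \in \Sp(Q[X])$ with $\alpha(0) = \mathrm{Id}$ and $\alpha(1) = \alpha$, so that each factor has the form $g(X)\,\rho(qX^4,\alpha_0 X^4)\,g(X)^{-1}$ or $g(X)\,\mu(qX^4,\beta_0 X^4)\,g(X)^{-1}$; this ensures the inner data still lies in the fourth power of $I[X]$.

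Next I would localize at an arbitrary maximal ideal $\gm$ of $R$. Then $P_\gm$ is free of rank $2n$, and the induced form on $Q_\gm$ corresponds to an alternating matrix $\varphi_\gm$ with Pfaffian $1$; after conjugating by $(I_3 \perp \varepsilon)$ as in Lemma \ref{local,rel} we may assume $\varphi_\gm = \psi_{n+1}$. Via Lemma \ref{symp,free,rel}, $\alpha(X)_\gm$ lies in $\ESp_{2n+2}(R_\gm[X], (I_\gm[X])^4)$. Applying Theorem \ref{tits-symp} (which is available because $R=2R$ is inherited by $R_\gm$, and $n+1 \ge 3$ since $n \ge 2$) yields $\alpha(X)_\gm \in \ESp_{2n+2}((I_\gm[X])^2)$, and then Remark \ref{inclusion-symplectic} gives $\alpha(X)_\gm \in \ESp_{2n+2}^1(I_\gm[X])$. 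Lemma \ref{symp,free,true-rel} (combined with Lemma \ref{phi=phi*,true-relative} to undo the conjugation used to standardise the form) re-interprets this as membership in $\ETrans_{\Sp}(IQ_\gm[X], \langle,\rangle_{\psi_1 \perp \varphi_\gm})$.

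Having verified the hypothesis of the Local--Global principle (Lemma \ref{symp,LG,true-rel}) at every maximal ideal, I would conclude that $\alpha(X) \in \ETrans_{\Sp}(IQ[X], \langle,\rangle)$; setting $X=1$ gives the desired $\alpha \in \ETrans_{\Sp}(IQ)$. The main conceptual obstacle here is really the bookkeeping involved in step three, namely transporting the free-case inclusion through the change of alternating form $\varphi_\gm = (1\perp \varepsilon)^t \psi_n (1\perp\varepsilon)$; fortunately Lemma \ref{phi=phi*,true-relative} was designed exactly for this purpose, so no new computation should be required. The remainder of the argument is a formal application of the dilation/local-global machinery developed earlier in the section.
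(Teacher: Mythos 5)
Your proposal is correct and follows essentially the same route as the paper's own proof: lift to a homotopy $\gamma(X)$ with the $X^4$-scaling, localize at each maximal ideal, standardize the form via Lemma \ref{local,rel} and Lemma \ref{phi=phi*,true-relative}, pass through $\ESp_{2n+2}(R_\gm[X],(I_\gm[X])^4)\subseteq \ESp_{2n+2}((I_\gm[X])^2)\subseteq\ESp_{2n+2}^1(I_\gm[X])$ using Theorem \ref{tits-symp} and Remark \ref{inclusion-symplectic}, and then apply the Local--Global principle and set $X=1$. No substantive differences from the paper's argument.
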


\begin{proof}
Let us consider an element $\gamma \in \ETrans_{\Sp}(Q,I^4Q)$. Note 
that $\gamma$ is product of elements of the form $g_1 ~ \rho(q_1, \alpha) ~ g_1^{-1}$
and $g_2 ~ \mu(q_2, \beta) ~ g_2^{-1}$, where $g_1, g_2 \in \ETrans_{\Sp}(Q, \langle, \rangle)$, $q_1, 
q_2 \in I^4P$, and $\alpha, \beta \in I^4$. Let us set $\gamma(X)$ to be the
product of elements of the form $g_1(X) ~ \rho(q_1 X^4, \alpha X^4) ~ g_1(X)^{-1}$
and $g_2(X) ~ \mu(q_2 X^4, \beta X^4) ~ g_2(X)^{-1}$, where $g_1(X), g_2(X) \in \ETrans_{\Sp}(Q[X],
\langle, \rangle)$. Here $\rho(q_1 X^4, \alpha X^4), \mu(q_2 X^4, \beta X^4) \in \ETrans_{\Sp}(I^4Q[X^4], \langle, \rangle)$,
with $q_1X^4, q_2 X^4 \in I^4 P[X^4]$ and $\alpha X^4, \beta X^4 \in I^4[X^4]$. Note that over a local ring
  $R_\gm$, for any maximal ideal $\gm$ of $R$, the alternating form
  $\langle , \rangle$ corresponds to the alternating matrix
  $\varphi_\gm$ with respect to some basis. Localizing
at a maximal ideal $\gm$ of $R$, we get that $\gamma(X)_\gm$ is product
of elements of the form $g_1(X)_\gm ~\rho(q_1 X^4, \alpha X^4)_\gm ~ g_1(X)_\gm ^{-1}$
and $g_2(X)_\gm ~\mu(q_2 X^4, \beta X^4)_\gm~ g_2(X)_\gm ^{-1}$. These 
elements belong to $\ETrans_{\Sp}(Q_\gm[X],I^4Q_\gm[X^4], \langle, \rangle_{\psi_1 \perp \varphi_\gm})$. By Lemma \ref{local,rel}, we have $\varphi_\gm = (1 \perp \varepsilon(\gm))^t \psi_n (1 \perp \varepsilon(\gm))$ for some $\varepsilon(\gm) \in \E_{2n-1}(R_\gm)$. Therefore, by Lemma \ref{phi=phi*,true-relative} we have 
\begin{eqnarray*}
&&\ETrans_{\Sp}(Q_\gm[X], I^4Q_\gm[X^4], \langle , \rangle_{\psi_1 \perp \varphi_\gm}) \\
&=& (I_3 \perp \varepsilon(\gm))^{-1}~ \ETrans_{\Sp}(Q_\gm[X], I^4Q_\gm[X^4],\langle , \rangle_{\psi_{n+1}}) ~ (I_3 \perp \varepsilon(\gm)).
\end{eqnarray*}

Note that the group $\ETrans_{\Sp}(Q_\gm[X], I^4Q_\gm[X^4],\langle , \rangle_{\psi_{n+1}})$ is equal to the group $\ESp_{2n+2}(R_\gm[X], (I_\gm[X])^4)$ by Lemma \ref{symp,free,rel} . Using Theorem \ref{tits-symp} we get that
$\gamma(X)_\gm \in (I_3 \perp \varepsilon(\gm))^{-1} \ESp_{2n+2}((I_\gm[X])^2) (I_3 \perp \varepsilon(\gm)) \subseteq (I_3 \perp \varepsilon(\gm))^{-1}\ESp_{2n+2}^1(I_\gm[X]) (I_3 \perp \varepsilon(\gm))$
(see Remark \ref{inclusion-symplectic}). Note that $\ESp_{2n+2}^1(I_\gm[X]) = \ETrans_{\Sp}(IQ_\gm[X], \langle, \rangle_{\psi_{n+1}})$ by Lemma \ref{symp,free,true-rel}. Therefore, $\gamma(X)_\gm \in \ETrans_{\Sp}(IQ_\gm[X], \langle, \rangle_{\psi_1 \perp \varphi_\gm})$ by Lemma \ref{phi=phi*,true-relative}.
Using Lemma \ref{symp,LG,true-rel}, we get that $\gamma(X) \in
\ETrans_{\Sp}(IQ[X], \langle, \rangle)$. Substituting $X=1$ we get that $\gamma \in
\ETrans_{\Sp}(IQ, \langle, \rangle)$.
\end{proof}

\begin{rmk}
{\rm We conclude this section with a remark similar to the Remark \ref{rmk-linTrans} in the previous section.
As before recall that Tits's result for the elementary symplectic group says that $\ESp_{2n}(R, I^2) \subseteq \ESp_{2n}(I)$ (see Theorem \ref{tits-symp}). However, our theorem as above involves fourth power of an ideal. In view of Tits's result 
it is desirable to establish the above theorem for
square of an ideal.}
\end{rmk}

\noindent
{\bf Acknowledgement:} The author thanks B. Sury for bringing to her notice the paper \cite{N} of B. Nica which 
initiated this work and for going through an earlier version of the preprint. The author 
also thanks Department of Science and Technology, Govt. of India for INSPIRE Faculty Award 
[IFA-13 MA-24] that supported this work.


\end{document}